\def\cA{{\mathcal A}}
\def\cL{{\mathcal L}}
\def\cM{{\mathcal M}}
\def\cN{{\mathcal N}}
\def\cO{{\mathcal O}}
\def\cR{{\mathcal R}}
\def\cS{{\mathcal S}}
\def\C{{\mathbf C}}
\def\N{{\mathbf N}}
\def\0{\bm{0}}
\def\e{\bm{e}}
\def\NT{{\mathbf{NT}}}
\def\pd{{\partial}}
\def\ann{\operatorname{Ann}}
\def\Hom{\operatorname{Hom}}
\def\ext{\operatorname{Ext}}
\def\myspan{\operatorname{Span}}
\def\nf{\operatorname{nf}}
\def\ord{\operatorname{ord}}
\def\res{\operatorname{Res}}
\def\supp{\operatorname{supp}}
\def\groebner{Gr\"{o}bner }
\def\bl{\bullet}
\def\ag#1{{\langle #1 \rangle}}
 \newtheorem{theorem}{Theorem}[section]
 \newtheorem{lem}[theorem]{Lemma}
 \theoremstyle{definition}
 \newtheorem{definition}[theorem]{Definition}
 \newtheorem{example}[theorem]{Example}
 \theoremstyle{remark}
 \numberwithin{equation}{section}
\begin{document}
%
%
%
\title[An algorithm for computing Grothendieck local residues]{An algorithm for computing Grothendieck local\\
residues II --- general case ---}

\author[K. Ohara]{Katsuyoshi Ohara}

\address{%
Faculty of Mathematics and Physics\\
Kanazawa University\\
Kakuma-machi, Kanazawa 920-1192, Japan}
\email{ohara@se.kanazawa-u.ac.jp}

\thanks{
This work has been partly supported by JSPS KAKENHI Grant Numbers JP17K05292, JP18K03320 
and by the Research Institute for Mathematical Sciences, a Joint Usage/Research Center located in Kyoto University.
}
\author[S. Tajima]{Shinichi Tajima}
\address{
Graduate School of Science and Technology\\
Niigata University\\
8050 Ikarashi 2 no-cho, Nishi-ku, Niigata 950-2181, Japan}
\email{tajima@emeritus.niigata-u.ac.jp}
\subjclass{Primary 32A27; Secondary 13N10}
\keywords{local residues, local cohomology, holonomic $D$-modules, Noether operators}

\date{January 1, 2004}

\begin{abstract}
Grothendieck local residue is considered in the context of symbolic
computation.  Based on the theory of holonomic $D$-modules, an effective
method is proposed for computing Grothendieck local residues.  The key
is the notion of Noether operator associated to a local cohomology
class.  The resulting algorithm and an implementation are described with
illustrations.
\end{abstract}

\maketitle
\section{Introduction}

In this paper, we consider Grothendieck local residues from the point of
view of computational algebraic analysis.  In a previous
paper~\cite{OharaTajima}, we considered the so-called shape basis cases
and gave algorithm for computing Grothendieck local residues.  In the
present paper, we adopt the same framework given in~\cite{OharaTajima}
and consider the general cases.  Main ingredients of our approach are
local cohomology, Noether operators and holonomic $D$-modules.
More precisely, we consider holonomic $D$-modules defined by
annihilating left ideals in a Weyl algebra of a zero-dimensional local
cohomology class, and partial differential operators associated to
the Grothendieck local residue mapping
\[
\varphi(x) \mapsto
\res_\beta(\frac{\varphi\, dx}{f_1\cdots f_n})=
\left(\frac{1}{2\pi \sqrt{-1}}\right)^n
\int_{\Gamma(\beta)}
\frac{\varphi(x)\, dx_1\wedge\cdots\wedge dx_n}{f_1(x)\cdots f_n(x)}.
\]

We compute a Noether operator, partial differential operator that
describe or represent Grothendieck local residue mapping, by solving a
system of partial differential equations in the Weyl algebra.

The paper is organized as follows: In Section~2, we briefly recall the
notion of local residues and algebraic local cohomology groups.  In
Section~3, we describe theory of Noether operators associated to a
primary ideal in terms of $D$-modules.  In Section~4, we give a
recursive method for finding annihilators of an algebraic local
cohomology class that avoids the use of \groebner bases on the Weyl
algebra.  In Section~5, we propose algorithms for determining Noether
operator, associated to a local cohomology class, which represents the
local residue mapping.  The resulting algorithms have been implemented
in a computer algebra system Risa/Asir~\cite{N2}.  An example by
Risa/Asir will be shown in Section~6.

\section{Algebraic local cohomology groups}\label{sec:3}

Let $K$ be a subfield of $\C$ and denote $K[x]=K[x_1, \ldots, x_n]$.
We suppose that a polynomial sequence $F=\{f_1, \ldots, f_n\}\subset K[x]$ is regular.
The polynomial ideal $I$ generated by $F$ is zero-dimensional
and the zero set $V_\C(I)=\{x \in \C^n \mid g(x)=0, \ \forall g\in I\}$ consists of finite number of points.

We introduce the $n$-th {\it algebraic local cohomology group} with
support on $Z=V_\C(I)$ by
\[
H_{[Z]}^n(K[x]) = \lim_{k\to \infty} \ext_{K[x]}^n (K[x]/(\sqrt{I})^k, K[x]).
\]
The algebraic local cohomology group $H_{[Z]}^n(K[x])$ can be regarded
as a collection of equivalent classes of rational functions whose
denominator has zero only on $Z$.  Here the equivalence is given by
cutting holomorphic parts of rational functions in a cohomological way.
Since the local residue can be described in terms of local
cohomology~\cite{Ha,TN2}, we have
$\res_\beta(\frac{\varphi dx}{f_1\cdots f_n})=\res_\beta(\varphi \sigma_F dx)$
where $\sigma_F=\left[\frac{1}{f_1 \cdots f_n}\right]\in H_{[Z]}^n(K[x])$
and $dx=dx_1\wedge\cdots\wedge dx_n$.

Algorithms for computing primary decomposition
$I = \bigcap_{\lambda=1}^\ell I_\lambda$ had been established~(e.g. see~\cite{GTZ,N}).
Suppose that, on the ring $K[x]$, a primary decomposition algorithm can be executed.
Since $I$ is zero-dimensional, the associated prime
$\sqrt{I_\lambda}$ is also zero-dimensional thus maximal.
In other words, $K_\lambda = K[x]/\sqrt{I_\lambda}$ is a field.

According to the primary decomposition, the zero
set is written as a union of irreducible affine varieties:
$Z=\bigcup_{\lambda=1}^\ell Z_\lambda$, where
$Z_\lambda=V_\C(\sqrt{I_\lambda})$.
Then $H_{[Z]}^n(K[x])$ can be decomposed to a direct sum
\[
H_{[Z]}^n(K[x]) = H_{[Z_1]}^n(K[x])\oplus \cdots \oplus H_{[Z_\lambda]}^n(K[x]) \oplus \cdots \oplus H_{[Z_\ell]}^n(K[x]).
\]
Therefore an algebraic local cohomology class $\sigma_F\in H_{[Z]}^n(K[x])$ has a unique
decomposition
\[
\sigma_F = \sigma_{F, 1}+\cdots+\sigma_{F, \lambda}+\cdots+\sigma_{F, \ell},
\]
where $\sigma_{F, \lambda} \in H_{[Z_\lambda]}^n(K[x])$.
Note that $\supp\, (\sigma_{F, \lambda})\subset Z_\lambda$.
The decomposition above is a kind of partial fractional expansion of
$\frac{1}{f_1 \cdots f_n}$ in terms of local cohomology.

Let $\beta \in Z_\lambda$ and $\varphi(x) \in \cO(U)$ where $U$ is a
small neighborhood of $\beta$.  
Since $\sigma_{F, j}$ vanishes on $U$ for $j\ne \lambda$, we have 
$\res_\beta(\varphi \sigma_F dx)=\res_\beta(\varphi\, \sigma_{F, \lambda}dx)$
for $\beta\in Z_\lambda$.  


Let $D_n=K\ag{x_1, \ldots, x_n, \pd_1, \ldots, \pd_n}$ be the ring of
linear partial differential operators with polynomial coefficients over $K$.
Here we used the symbol $\pd_i = \frac{\pd}{\pd x_i}$.
The noncommutative ring $D_n$ is called the Weyl algebra in $n$ variables.

Let $P$ be a zero-dimensional prime ideal generated by $H=\{h_1,\ldots,h_n\}$ and
put $J_H=\left(\det\left[\frac{\pd h_j}{\pd x_j}\right]_{ij} \bmod P\right)$.
An algebraic local cohomology class
\[
\delta_{Y} = \left[\frac{J_H}{h_1h_2\cdots h_n}\right] \in H_{[Y]}^n(K[x])
\]
is called the {\it delta function} with the support $Y=V_\C(P)$.

Recall that an algebraic local cohomology group has a structure of
$D_n$-module.
It follows from $H_{[Y]}^n(K[x])=D_n\bl \delta_{Y}$~(e.g.~\cite{OharaTajima} Lemma~3.2)
that
there exists a linear differential operator $T_{F, \lambda}\in D_n$ such that
$\sigma_{F, \lambda} = T_{F, \lambda}^* \bl \delta_{Z_\lambda}$ where 
$T_{F, \lambda}^*$ is the formal adjoint of $T_{F, \lambda}$.  
Therefore
\begin{align*}
\res_\beta(\frac{\varphi dx}{f_1\cdots f_n})
&=
\res_\beta(\varphi \sigma_F dx)
\\
&=
\res_\beta(\varphi\sigma_{F,\lambda} dx)
\\
&=
\res_\beta(\varphi\cdot (T_{F,\lambda}^*\bl \delta_{Z_\lambda}) dx)
\\
&=
\res_\beta((T_{F,\lambda}\bl \varphi)\cdot \delta_{Z_\lambda}dx)
\\
&=
(T_{F,\lambda}\bl \varphi)|_{x=\beta}.
\end{align*}
That is, the mapping $\varphi \mapsto \res_\beta(\varphi\sigma_Fdx)$ is
determined by the differential operator $T_{F,\lambda}$.
Since the set $\{(T_{F, \lambda}, Z_\lambda) \mid \lambda=1, 2, \ldots,
\ell\}$ gives the Grothendieck local residue mappings, the local residue
of any meromorphic $n$-forms can be evaluated by differential operators
$T_{F,\lambda}$.

\section{Noether differential operators}

In 1960's, L. Ehrenpreis studied systems of linear partial differential
equations with constant coefficients.  In his theory, some differential
operators with polynomial coefficients play an important role~\cite{E}.
Today, these operators are called Noether operators (e.g.~\cite{Pa}).
They are very useful to handle local cohomology classes.  We give a
modern definition of Noether operators in terms of $D$-modules.
Throughout this section, we suppose that $J$ is a zero-dimensional
primary ideal over $K[x]=K[x_1, \ldots, x_n]$.
Set $M_J=D_n/D_nJ$ and $M_{\sqrt{J}}=D_n/D_n\sqrt{J}$.


\begin{definition}
The homomorphisms 
$\Hom_{D_n}(M_J,M_{\sqrt{J}})$ \ between left 
$D_n$-modules is called the {\it Noether space} of $J$.
\end{definition}

\begin{definition}
The ratio $m=\dim_K (K[x]/J) / \dim_K (K[x]/\sqrt{J})$
of dimensions of vector spaces is called the {\it multiplicity} of the ideal $J$.
\end{definition}

\begin{lem}
There exist $\rho_1, \rho_2, \ldots, \rho_m \in \Hom_{D_n}(M_J,M_{\sqrt{J}})$ 
such that any $\rho\in \Hom_{D_n}(M_J,M_{\sqrt{J}})$ can be written as
\[
\rho = \rho_1 c_1 + \rho_2 c_2 + \cdots + \rho_m c_m
\]
by unique polynomials $c_1, c_2, \ldots, c_m\in K[x]/\sqrt{J}$.
\end{lem}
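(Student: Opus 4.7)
The plan is in three steps: reduce the Hom space to a concrete annihilator subspace of $M_{\sqrt{J}}$, install the stated $K[x]/\sqrt{J}$-module structure via $\End_{D_n}(M_{\sqrt{J}})$, and compute the rank using Grothendieck local duality. Setting $N := \{u \in M_{\sqrt{J}} : J \bl u = 0\}$ and using the cyclic presentation $M_J = D_n/D_n J$ with generator $\bar 1 = 1 + D_n J$, each $\rho \in \Hom_{D_n}(M_J, M_{\sqrt{J}})$ is uniquely determined by $\rho(\bar 1)$, and well-definedness is equivalent to $J \bl \rho(\bar 1) = 0$; this yields a $K$-linear bijection $\Hom_{D_n}(M_J, M_{\sqrt{J}}) \cong N$.

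Write $k := K[x]/\sqrt{J}$, which is a field because $\sqrt{J}$ is maximal. I would next identify $\End_{D_n}(M_{\sqrt{J}}) \cong k$: left multiplication by $K[x]$ on $M_{\sqrt{J}}$ factors through $k$ because $\sqrt{J} \bl \delta_Y = 0$ (where $Y = V_\C(\sqrt{J})$ and $\delta_Y$ generates $M_{\sqrt{J}}$ as a $D_n$-module via the identification recalled in Section~2), and surjectivity onto $\End_{D_n}(M_{\sqrt{J}})$ follows from the simplicity of $M_{\sqrt{J}}$. Post-composition with these endomorphisms then equips the Hom space, and therefore $N$ through the bijection above, with a $k$-module structure.

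The dimension is obtained from the Grothendieck residue pairing
\[
(K[x]/J) \times N \to k, \qquad (\varphi, u) \mapsto \sum_{\beta \in Y} \res_\beta(\varphi \, u \, dx),
\]
which is well-defined thanks to $J \bl u = 0$. By zero-dimensional Grothendieck local duality, $M_{\sqrt{J}} \cong H^n_{[Y]}(K[x])$ is the injective hull of $k$ over the local ring $K[x]_{\sqrt{J}}$, and Matlis duality identifies $N = \ann_{M_{\sqrt{J}}}(J)$ with $\Hom_k(K[x]/J, k)$; since $\dim_k(K[x]/J) = m$ by the definition of multiplicity, this gives $\dim_k N = m$, and any $k$-basis $\rho_1, \ldots, \rho_m$ of the Hom space supplies the claimed unique expansion. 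The hardest step will be the non-degeneracy of this pairing (equivalently, the Matlis duality statement), especially when $K$ is not algebraically closed so that $Y$ is a Galois orbit of $\bar K$-points: one must verify that the residue sum lies in $k$ (rather than merely in $\bar K$) and induces a perfect $k$-bilinear pairing.
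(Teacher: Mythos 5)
The paper states this lemma without proof (it belongs to the Ehrenpreis--Palamodov theory of Noether operators, cf.\ \cite{E,Pa,T2}), so your argument has to stand on its own. Your overall strategy is the standard and correct one: identify $\Hom_{D_n}(M_J,M_{\sqrt{J}})$ with $N=\{u\in M_{\sqrt{J}}\mid J\bl u=0\}$ through $\rho\mapsto\rho(\bar 1)$, use $M_{\sqrt{J}}\simeq D_n\bl\delta_Y=H^n_{[Y]}(K[x])$ together with zero-dimensional local (Matlis) duality to control dimensions, and use a $k$-structure, $k=K[x]/\sqrt{J}$, to get the free-module statement. The genuine gap is in how you install that $k$-structure, and it sits exactly at the heart of the lemma. ``Left multiplication by $K[x]$ on $M_{\sqrt{J}}$'' is not a $D_n$-endomorphism: multiplication by a nonconstant $c$ does not commute with the $\pd_i$-action, since $[\pd_i,c]=\pd c/\pd x_i$ acts nontrivially on the faithful $K[x]$-module $H^n_{[Y]}(K[x])$. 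Likewise the multiplication action of $K[x]$ on $N=\ann_{M_{\sqrt{J}}}(J)$ does \emph{not} factor through $k$: already for $n=1$, $J=\ag{x^2}$, $\sqrt{J}=\ag{x}$, one has $x\bl[1/x^2]=[1/x]\ne 0$ in $N$. So neither of these can be used to make $N$ a $k$-vector space, and with them your identification ``$N\simeq\Hom_k(K[x]/J,k)$'' is not meaningful as stated. The action you actually need is induced by right multiplication in the Weyl algebra, $P+D_n\sqrt{J}\mapsto Pc+D_n\sqrt{J}$ (equivalently, the $D_n$-linear map determined by $\delta_Y\mapsto c\bl\delta_Y$); this is well defined, kills $\sqrt{J}$, commutes with the left $D_n$-action, and is precisely what makes the Noether space a \emph{right} $k$-module, matching the paper's normal form $\sum_i R_i\,c_i(x)$ with the polynomial coefficients written to the right of the operators.

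Once that is fixed, your proof both closes and simplifies. You do not need surjectivity of $k\to\Hom_{D_n}(M_{\sqrt{J}},M_{\sqrt{J}})$ (and simplicity alone does not give it: Schur's lemma only yields a division ring, and bounding its size needs the same duality-type count), nor do you need the residue pairing to be perfect and $k$-bilinear, which is the step you flagged as hardest. It suffices that (i) the Hom space is a module over the field $k$ via the right-multiplication action, hence a $k$-vector space, and (ii) $\dim_K N=\dim_K K[x]/J=m\,d$ with $d=\dim_K k$, which is exactly your Matlis-duality step kept at the level of $K$-dimensions: $H^n_{[Y]}(K[x])$ is the injective hull of $k$ over $K[x]$, so $N\simeq\Hom_{K[x]}(K[x]/J,H^n_{[Y]}(K[x]))$ has the same $K$-dimension as $K[x]/J$. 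Then the $k$-dimension of the Hom space is $md/d=m$ automatically, and any $k$-basis $\rho_1,\ldots,\rho_m$ gives the asserted existence and uniqueness of the coefficients $c_i\in k$. In short: right conclusion and right skeleton, but the key module structure is attached to the wrong action, and the duality should be invoked as a $K$-dimension count rather than as an unproved $k$-linear duality over a non-algebraically-closed $K$.
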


Notice that the Noether space has a structure of a {\it right} $K[x]/\sqrt{J}$-module,
that is, the Noether space is an $m$-dimensional vector space over $K[x]/\sqrt{J}$.

Since $1\in M_J$ is an equivalent class of differential operators,
a representative $R_i$ of the image $\rho_i (1)$ is a differential operator, $i=1,2,\ldots,m$.
We call the set $\mathcal{R}=\{R_1, R_2, \ldots, R_m\}$ {\it Noether operator basis}.

Let $L_i$ denotes the formal adjoint of $R_i$.  Then we have the following theorem.
\begin{theorem}
The differential operators $L_1,L_2,\ldots,L_m$ satisfy
\begin{equation}\label{cond:nop}
J=\{h \in K[x] \mid L_1\bl h, L_2\bl h, \ldots, L_m\bl h\in \sqrt{J}\}.
\end{equation}
\end{theorem}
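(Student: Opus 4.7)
My plan is to prove the two inclusions separately. The forward one, $J\subseteq\{h:L_i\bl h\in\sqrt{J},\ \forall i\}$, is a direct adjoint calculation. Fix $h\in J$ and $i$. Because $R_i\in D_n$ represents $\rho_i(1)\in M_{\sqrt{J}}$ and $\rho_i:M_J\to M_{\sqrt{J}}$ is well-defined while $h\cdot[1]=0$ in $M_J$, we must have $hR_i\in D_n\sqrt{J}$. Writing $hR_i=\sum_j Q_j g_j$ with $Q_j\in D_n$ and $g_j\in\sqrt{J}$, and taking formal adjoints in $D_n$ (which reverses products and fixes polynomials), I get $L_i h=\sum_j g_j Q_j^*$. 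Applying this operator identity to the constant $1\in K[x]$ yields
\[
L_i\bl h=(L_i h)\bl 1=\sum_j g_j\,(Q_j^*\bl 1)\in\sqrt{J},
\]
since each $g_j\in\sqrt{J}$ and $Q_j^*\bl 1\in K[x]$.

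For the reverse inclusion, let $J'=\{h\in K[x]:L_i\bl h\in\sqrt{J},\ i=1,\ldots,m\}$; the forward step gives $J\subseteq J'$. I consider the tautologically well-defined injective $K$-linear map
\[
\Phi:K[x]/J'\longrightarrow(K[x]/\sqrt{J})^m,\qquad [h]\longmapsto(L_i\bl h\bmod\sqrt{J})_i,
\]
and aim to show $\Phi$ is surjective. If so, then
\[
\dim_K K[x]/J'=m\cdot\dim_K K[x]/\sqrt{J}=\dim_K K[x]/J
\]
by the definition of multiplicity, which combined with $J\subseteq J'$ forces $J=J'$.

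The main obstacle is proving surjectivity of $\Phi$, and this is where the basis property of the Noether space is actually used. My plan is to deduce it from the non-degeneracy of the residue pairing between $K[x]/J$ and the $J$-torsion of algebraic local cohomology. Under the cyclic presentation $M_{\sqrt{J}}\cong D_n\bl\delta_{V_\C(\sqrt{J})}$ recalled in the paper, each $\rho_i$ corresponds to the class $\psi_i:=R_i\bl\delta_{V_\C(\sqrt{J})}\in H_{[V_\C(\sqrt{J})]}^n(K[x])$, and the basis property of the preceding lemma makes the $\psi_i$ generate the $J$-torsion submodule in a manner compatible with multiplicity. The residue formula $\res_\beta(\varphi\cdot T^*\bl\delta_Z\,dx)=(T\bl\varphi)(\beta)$ from Section~2, applied with $T=L_i$, identifies the pairing $h\mapsto\res_\beta(h\,\psi_i\,dx)$ with $h\mapsto(L_i\bl h)(\beta)$, so $\Phi$ is essentially the matrix of this residue pairing expressed in the basis $\{\psi_i\}$ and the points of $V_\C(\sqrt{J})$. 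Classical non-degeneracy of the residue pairing at a zero-dimensional complete intersection then yields the surjectivity and completes the argument.
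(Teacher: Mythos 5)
Your forward inclusion is correct and complete: since $h\cdot[1]=0$ in $M_J$ and $\rho_i$ is a well-defined morphism of left $D_n$-modules, $hR_i\in D_n\sqrt{J}$, and taking formal adjoints and applying the resulting operator identity to $1$ gives $L_i\bl h\in\sqrt{J}$. Note that the paper itself gives no proof of this theorem (it defers to Palamodov), so there is no internal argument to compare with; your framing of the converse via the injective map $\Phi: K[x]/J'\to (K[x]/\sqrt{J})^m$ and the multiplicity count $\dim_K K[x]/J=m\,\dim_K K[x]/\sqrt{J}$ is also sound, and surjectivity of $\Phi$ would indeed force $J'=J$.

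The gap is at that decisive surjectivity step. First, the classical fact you invoke, non-degeneracy of the residue pairing ``at a zero-dimensional complete intersection,'' is not the statement you need: in this theorem $J$ is an arbitrary zero-dimensional primary ideal (in the application a primary component of $\ag{F}$, which in general is not a complete intersection), and the pairing you actually use is between $K[x]/J$ and the $J$-annihilated submodule $W_J=\{\tau\in H^n_{[Z]}(K[x])\mid J\tau=0\}$, $Z=V_\C(\sqrt{J})$; its perfectness is zero-dimensional local (Grothendieck/Matlis) duality, not the complete-intersection residue pairing on $K[x]/\ag{F}$, and it must be quoted in that form or reduced to a regular sequence contained in $J$. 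Second, ``$\Phi$ is essentially the matrix of this residue pairing'' is asserted rather than argued, and one precision matters: transporting the Lemma through $M_{\sqrt{J}}\simeq D_n\bl\delta_{Z}$ (which uses $\ann_{D_n}(\delta_{Z})=D_n\sqrt{J}$) says that every $\tau\in W_J$ is uniquely $\tau=\sum_i (R_ic_i)\bl\delta_{Z}$ with $c_i\in K[x]/\sqrt{J}$ --- the coefficient sits inside the operator, which is not the same as $c_i\psi_i$. With that version the argument closes: since $(R_ic_i)^*=c_iL_i$, the adjoint formula gives $\res_\beta\bigl(h\,(R_ic_i\bl\delta_{Z})\,dx\bigr)=c_i(\beta)\,(L_i\bl h)(\beta)$, so a functional killing the image of $\Phi$ corresponds to $\tau=\sum_i(R_ic_i)\bl\delta_{Z}\in W_J$ pairing to zero with every polynomial, whence $\tau=0$ by local duality and $c_i=0$ by the uniqueness in the Lemma. (More directly, the same computation shows that any $h$ with all $L_i\bl h\in\sqrt{J}$ pairs to zero with all of $W_J$, hence $h\in J$, bypassing $\Phi$ altogether; a residual minor point either way is that residues are computed over $\C$, so one also uses $J\C[x]\cap K[x]=J$.) So your route is viable, but as written the duality input is misidentified and the key deduction is left unexecuted.
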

Note that the differential operators above coincide with {\it Noether operators} introduced by V. P. Palamodov~\cite{Pa}.

We describe a recursive method for computing Noether operators.
We can suppose that $\ord(L_1)\le \ord(L_2)\le \cdots \le \ord(L_m)$ without loss of generality.
Put $\NT = \bigoplus_{i=1}^m (K[x]/\sqrt{J})\, L_i$ and 
$\NT^{(k)} = \{ L \in \NT \mid \ord(L) \le j \}$ 
for each natural number $k$.
The recursion will be executed until $\dim_{K[x]/\sqrt{J}} \NT^{(r)} = m$. 

At first, we have $L_1=1$ because $\NT^{(0)}=K[x]/\sqrt{J}$.

Second, a first order Noether operator can be written as
$L = \sum_{i=1}^n a_i\pd_i$ where $a_i \in K[x]/\sqrt{J}$ 
because zeroth order terms are reduced.
Let $J=\ag{g_1,g_2,\ldots,g_\mu}$ and $\sqrt{J}=\ag{p_1,p_2,\ldots,p_\eta}$.
From the condition~$(\ref{cond:nop})$, for any $1\le j\le \mu$, there exist polynomials 
$b_{j1}, b_{j2}, \ldots, b_{j\eta}$ such that
$\sum_{i=1}^n a_i\pd_i\bl g_j + \sum_{k=1}^\eta b_{jk} p_k = 0$.
The system of relations above can be rewritten, as an equation on a $K[x]$-module $K[x]^\mu$, as follows:
\begin{equation}\label{eq:3.2}
\sum_{i=1}^n a_i \left(\sum_{j=1}^\mu \frac{\pd g_j}{\pd x_i} \e_j\right)
 + \sum_{j=1}^\mu \sum_{k=1}^\eta b_{jk} p_k \e_j = \0.
\end{equation}
Here $\e_j$ is the $j$-th unit vector.
Let
\[
\cN^{(1)} = \left\{ \left. \sum_{j=1}^\mu \frac{\pd g_j}{\pd x_i} \e_j \ \right| \ i = 1,2, \ldots, n \right\}
\cup 
\{ p_k \e_j \mid 1\le j\le \mu,\ 1\le k \le \mu \}.
\]
It can be regarded as a finite subset of the $K[x]$-module $K[x]^{\mu}$.
The solutions $(a_i; b_{jk})$ of the equation $(\ref{eq:3.2})$ form the syzygy module 
with respect to $\cN^{(1)}$.
Each solution corresponds to a first order Noether operator $L=\sum_{i=1}^n a_i \pd_i$
if $(a_i) \ne \0$.
Using a \groebner basis of the syzygy module, we can determine a basis of $\NT^{(1)}$.

Next we discuss construction of a basis of $\NT^{(r)}$.
Let $\{L_1,L_2,\ldots,L_q\}$ be a basis of $\NT^{(r-1)}$ and
let $L=\sum_{1\le |\alpha|\le r} a_\alpha\pd^\alpha \in \NT^{(r)}$ 
where the multi-index $\alpha$ runs over $\N_0^n$ and $a_\alpha \in K[x]/\sqrt{J}$.
The commutator $[L,f] = Lf-fL$ is also an element of $\NT^{(r-1)}$ if $f\in K[x]$.
Thus \\
$[L,x_1], [L,x_2], \ldots, [L,x_n] \in \NT^{(r-1)}$.

\begin{lem}
A differential operator $P\in D_n$ satisfies a relation $P\bl J \subset \sqrt{J}$ if and only if
\[
\{P\bl g_1,P\bl g_2,\ldots,P\bl g_\mu\} \subset \sqrt{J} \quad \text{and}\quad [P,x_i]\bl J \subset \sqrt{J} \quad (i=1,2,\ldots,n).
\]
\end{lem}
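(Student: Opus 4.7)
The plan is to prove the two directions separately: the ``only if'' direction is a direct consequence of ideal-theoretic closure, while the ``if'' direction requires an induction on the degree of polynomial coefficients to extend the hypothesis from the generators $g_j$ to arbitrary elements of $J$.

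For the ``only if'' direction, I would observe that $g_j \in J$ immediately yields $P\bl g_j \in \sqrt{J}$. For the commutator condition, take any $h\in J$. Since $J$ is an ideal, $x_ih \in J$, so by hypothesis $P\bl(x_ih)\in\sqrt{J}$. Also $P\bl h \in \sqrt{J}$, and since $\sqrt{J}$ is an ideal, $x_i(P\bl h)\in \sqrt{J}$. Hence
\[
[P,x_i]\bl h = P\bl(x_ih) - x_i(P\bl h) \in \sqrt{J}.
\]

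For the ``if'' direction, since $J=\ag{g_1,\ldots,g_\mu}$, any $h\in J$ can be written as $h=\sum_{j=1}^\mu c_jg_j$ with $c_j\in K[x]$, so by $K$-linearity of $P$ it suffices to show that $P\bl(x^\alpha g_j)\in\sqrt{J}$ for every multi-index $\alpha\in\N_0^n$ and every $j$. I would prove this by induction on $|\alpha|$. The base case $|\alpha|=0$ is exactly the first hypothesis. For the inductive step, pick $i$ with $\alpha_i>0$ and use the product-rule identity
\[
P\bl(x_i\cdot x^{\alpha-\e_i}g_j) \;=\; x_i\bigl(P\bl(x^{\alpha-\e_i}g_j)\bigr) + [P,x_i]\bl(x^{\alpha-\e_i}g_j),
\]
which follows at once from the definition $[P,x_i]=Px_i-x_iP$. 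The first summand lies in $\sqrt{J}$ by the induction hypothesis combined with the ideal property of $\sqrt{J}$; the second summand lies in $\sqrt{J}$ by the commutator hypothesis applied to $x^{\alpha-\e_i}g_j\in J$.

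I do not anticipate any serious obstacle: the only bookkeeping point is to recognize that reducing to monomial coefficients $x^\alpha$ and then peeling off one variable $x_i$ at a time cleanly connects the hypotheses to the desired conclusion. The induction terminates because $|\alpha|$ strictly decreases, and no properties of $P$ beyond $K$-linearity and the commutator identity are required.
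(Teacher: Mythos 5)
Your proof is correct: the ``only if'' direction via $[P,x_i]\bl h = P\bl(x_ih)-x_i(P\bl h)$ and the ``if'' direction by reducing to monomial multiples $x^\alpha g_j$ and inducting on $|\alpha|$, peeling off one variable at a time with $P x_i = x_i P + [P,x_i]$, is exactly the standard argument intended here (the paper states the lemma without supplying a proof). No gaps; the only properties used, $K$-linearity of the action and the fact that $\sqrt{J}$ is an ideal, are all that is needed.
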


From the lemma above, we consider a system of membership problems
$L\bl g_i \in \sqrt{J}$ and $[L,x_j] \in \NT^{(r-1)}$.
That is, there exist polynomials $b_{ih}$, $b_{kj}'$, $b_{\beta h}'' \in K[x]$ such that 
\begin{eqnarray}
\label{eq:3.3a}
\sum_{1\le |\alpha|\le r} a_\alpha (\pd^\alpha \bl g_i)
 + \sum_{h=1}^\eta b_{ih} p_h &=& 0, \qquad (1\le i\le \mu)
\\
\label{eq:3.3b}
\sum_{1\le |\alpha|\le r} a_\alpha ([\pd^\alpha, x_k])
+ \sum_{j=1}^q b_{kj}' L_j
+ \sum_{1\le |\beta|< r} \sum_{h=1}^\eta b_{\beta h}''p_h\pd^\beta &=& 0. \qquad (1\le k\le n)
\end{eqnarray}
Here the commutator $[\pd^\alpha,x_k]$ and $L_j$ can be expressed as
$\sum_{0\le |\beta| < r} c_{\alpha k\beta} \pd^\beta$ and
$\sum_{0\le |\beta| < r} d_{j \beta} \pd^\beta$ respectively.
Recall $L_1=1$.
Removing zeroth order terms, the equation $(\ref{eq:3.3b})$ can be rewritten as
\[
\sum_{1\le |\beta| < r} 
\left(
\sum_{1\le |\alpha|\le r} a_\alpha 
c_{\alpha k\beta} 
+ \sum_{j=2}^q b_{kj}' 
d_{j \beta} 
+ 
\sum_{h=1}^\eta b_{\beta h}''p_h
\right)\pd^\beta
 = 0. \qquad (1\le k\le n)
\]
Thus
\begin{equation}\label{eq:3.3c}
\sum_{1\le |\alpha|\le r} a_\alpha 
c_{\alpha k\beta} 
+ \sum_{j=2}^q b_{kj}' 
d_{j \beta} 
+ 
\sum_{h=1}^\eta b_{\beta h}''p_h
 = 0. \qquad (1\le k\le n, 1\le |\beta| < r)
\end{equation}
Since the set $\{ \beta \in \N_0^n \mid 1\le |\beta| < r\}$ has $N:=\binom{n+r-1}{n}-1$ elements,
there exists a bijection $\varphi: \{1,2,\ldots,n\} \times \{ \beta \in \N_0^n \mid 1\le |\beta| < r\} \to \{\mu+1,\mu+2,\ldots,\mu+nN\}$.
The system $(\ref{eq:3.3a}),(\ref{eq:3.3c})$ can be rewritten, as single
equation on $K[x]^{\mu + n N}$, as follows: 
\begin{eqnarray}
\nonumber
&&\sum_{1\le |\alpha|\le r} a_\alpha 
\biggl\{
\sum_{i=1}^\mu(\pd^\alpha \bl g_i) \e_i + 
\sum_{k=1}^n \sum_{1\le |\beta|<r} c_{\alpha k\beta}  \e_{\varphi(k,\beta)}
\biggr\}
\\
&&\qquad
+ \sum_{j=1}^q \sum_{k=1}^n b_{kj}' 
\biggl\{\sum_{1\le |\beta|<r} d_{j \beta}  \e_{\varphi(k,\beta)}
\biggr\}
+ 
\sum_{h=1}^\eta \sum_{t=1}^{\mu+nN} f_{th} p_h \e_t
= \0,
\label{eq:3.6}
\end{eqnarray}
where $\e_t$ is the $t$-th fundamental vector of dimension $\mu+nN$.
The equation $(\ref{eq:3.6})$ can be solved by \groebner basis computation 
of the syzygy module with respect to a finite set
\begin{eqnarray}
&&
\cN^{(r)} = \left\{
\left.
\sum_{i=1}^\mu(\pd^\alpha \bl g_i) \e_i + 
\sum_{k=1}^n \sum_{1\le |\beta|<r} c_{\alpha k\beta} \e_{\varphi(k,\beta)}
\ \right| \ 1\le |\alpha|\le r
\right\}
\nonumber
\\
&&
\qquad\qquad
\cup
\left\{
\left.
\sum_{1\le |\beta|<r} d_{j \beta} \e_{\varphi(k,\beta)} 
\ 
\right|
\ 
1\le j\le q,
1\le k\le n
\right\}
\nonumber
\\
&&
\qquad\qquad
\cup\ 
\{ p_h \e_t \mid 1\le h\le \eta, \ 1\le t \le \mu+nN \}.
\label{eq:3.7}
\end{eqnarray}
A solution $(a_\alpha; b_{kj}'; f_{th})$ gives an $r$-th order Noether operator $L=\sum_{\alpha} a_\alpha \pd^\alpha$ if there exists $\gamma\in \N_0^n$ such that $a_\gamma \ne 0$ and $|\gamma|=r$.
Hence a basis of $\NT^{(r)}$ can be determined.

\begin{algorithm}[H]
\caption{a basis of $\NT^{(1)}$}
\label{alg:noether1}
\begin{algorithmic}
\REQUIRE $J=\ag{g_1, g_2,\ldots, g_\mu}$, $\sqrt{J}=\ag{p_1,p_2,\ldots,p_\eta}$
\ENSURE $\cL^{(1)}$: a basis of $\NT^{(1)}$.
\STATE $E \gets \text{(the unit matrix of size $\mu$)}$
\STATE $J \gets \left[\frac{\pd g_j}{\pd x_i}\right]_{ji}$: Jacobi matrix of size $(\mu,n)$
\STATE $M \gets [ J \, | \, p_1 E \, | \, \cdots \, | \, p_\eta E ]$: combined matrix
\STATE $S \gets \text{(\groebner basis of the syzygy module of columns vectors of $M$)}$
\STATE $\cL^{(1)}\gets \{1\}$
\FORALL{$(a_i; b_{jk})\in S$}
\IF{$(a_1,\ldots,a_n)\ne \0$}
\STATE $\cL^{(1)} \gets \cL^{(1)} \cup \{\sum_{i=1}^n a_i \pd_i \}$
\ENDIF
\ENDFOR
\RETURN $\cL$
\end{algorithmic}
\end{algorithm}

\begin{algorithm}[H]
\caption{a basis of $\NT^{(r)}$}
\label{alg:noether2}
\begin{algorithmic}
\REQUIRE $r\ge 2$, $J$, $\sqrt{J}$, $\cL^{(r-1)}$: a basis of $\NT^{(r-1)}$
\ENSURE $\cL^{(r)}$: a basis of $\NT^{(r)}$.
\STATE $\cN^{(r)} \gets$ (the right hand side of the equation $(\ref{eq:3.7})$ by $J$, $\sqrt{J}$, $\cL^{(r-1)}$)
\STATE $S \gets$ (\groebner basis of the syzygy module with respect to $\cN^{(r)}$)
\STATE $\cL^{(r)} \gets \cL^{(r-1)}$
\FORALL{$(a_\alpha; b_{kj}';f_{th})\in S$}
\STATE $L \gets \sum_{1\le |\alpha|\le r} a_\alpha\pd^{\alpha}$
\IF{$\ord(L)=r$}
\STATE $\cL^{(r)} \gets \cL^{(r)} \cup \{L \}$
\ENDIF
\ENDFOR
\RETURN $\cL^{(r)}$
\end{algorithmic}
\end{algorithm}

\begin{algorithm}[H]
\caption{a basis of $\NT$}
\label{alg:noether3}
\begin{algorithmic}
\REQUIRE $J=\ag{g_1, g_2,\ldots, g_\mu}$, $\sqrt{J}=\ag{p_1,p_2,\ldots,p_\eta}$
\ENSURE $\cL$: a basis of $\NT$.
\STATE $m\gets \text{(the multiplicity of $J$)}$
\STATE $\cL \gets$ (the result by Algorithm~\ref{alg:noether1} from $J$ and $\sqrt{J}$)
\STATE $r\gets 2$
\WHILE{$|\cL|<m$}
\STATE $\cL \gets$ (the result by Algorithm~\ref{alg:noether2} from $r$, $J$, $\sqrt{J}$, and $\cL$)
\STATE $r\gets r+1$
\ENDWHILE
\RETURN $\cL$
\end{algorithmic}
\end{algorithm}

\begin{example}\label{ex:3.6}
Let $f_1=(x^2-2)(x^4-4x^2-y^4-5y^3-9y^2-7y+2)$ and $f_2=x^4-4x^2-y^3-3y^2-3y+3$.
Then $J=\ag{f_1,f_2}$ is a primary ideal with multiplicity 11.
Using Algorithm~\ref{alg:noether3}, Noether operators $\cL=\{L_1,L_2,\ldots,L_{11}\}$ can be computed as follows:
\begin{eqnarray*}
&&
L_1=1,  \qquad 
L_2={\pd_1}, \qquad 
L_3={\pd_2}, \qquad 
L_4={\pd_2}^2, \qquad 
L_5={\pd_1}{\pd_2} , \qquad 
L_6=3 {\pd_1}^2 + 8 {\pd_2}^3, \\
&&
L_7={\pd_1}{\pd_2}^2 , \qquad 
L_8=3 {\pd_1}^2{\pd_2} +2 {\pd_2}^4, \qquad 
L_9=2 {\pd_1}^3-3 x {\pd_1}^2+16 {\pd_1}{\pd_2}^3 , \\
&&
L_{10}=15 {\pd_1}^2{\pd_2}^2  + 4 {\pd_2}^5, \qquad 
L_{11}= 30 {\pd_1}^4 - 60 x {\pd_1}^3 + 480 {\pd_1}^2{\pd_2}^3 + 135{\pd_1}^2 + 64 {\pd_2}^6.
\end{eqnarray*}
\end{example}

\section{Annihilators}

Let $\ann_{D_n}(\sigma_F)$ denote the annihilating
left ideal in the Weyl algebra $D_n$
of the algebraic local cohomology class
$\sigma_F=\left[\frac{1}{f_1 \cdots f_n}\right]$ 
where $F=\{f_1,f_2, \ldots, f_n\}$ is a regular sequence.
In general, the annihilating left ideal can be computed by using
restriction algorithm that involves \groebner basis computation
in Weyl algebras~\cite{O, OakuT}.
However the cost of computation of the algorithm that outputs \groebner basis of the ideal 
$\ann_{D_n}(\sigma_F)$ is quit high.

Since our approach requires only generators of the annihilating ideal,
we adopt an alternative method introduced in \cite{TN2} for constructing generators 
of $\ann_{D_n}(\sigma_F)$ for avoiding  \groebner bases computation in the Weyl algebra.
In this section, we give a recursive method.

Let $\cA^{(k)}$ denote the set of $k$-th or lower order annihilators of $\sigma_F$.
Since $K[x]\subset D_n$, the polynomials can be regarded as zeroth order 
differential operators.
From the definition of the algebraic local cohomology class $\sigma_F$,
a polynomial $g(x)$ satisfies $g(x)\bl\sigma_F = 0$ if and only if $g(x)
\in I$.
Thus the polynomial ideal $I$ coincides with $\cA^{(0)}$.

Let $P\in \cA^{(k)}$ and $f\in F$.
The commutator $[P,f]$ is also an annihilator in $\cA^{(k-1)}$.
Inversely, the annihilator $P$ can be determined from commutators.
\begin{lem}\label{lemma:4.1}
If a differential operator $P'$ satisfies $[P',f_1], [P',f_2], \ldots, [P',f_n]\in \ann_{D_n}(\sigma_F)$, then there exists a polynomial $c(x)$ such that $P'+c \in \ann_{D_n}(\sigma_F)$.
\end{lem}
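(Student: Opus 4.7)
The plan is to interpret the hypothesis through the commutator identity and exploit the fact, established in the paragraph just above the lemma, that $\cA^{(0)} = I$.

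First I would expand $[P',f_i]\bl\sigma_F = P'\bl(f_i\bl\sigma_F) - f_i\bl(P'\bl\sigma_F)$. Because each $f_i$ lies in $I = \cA^{(0)}$, the term $f_i\bl\sigma_F$ vanishes, and the assumption $[P',f_i]\bl\sigma_F = 0$ then forces $f_i\bl(P'\bl\sigma_F) = 0$ for every $i$. Setting $\tau := P'\bl\sigma_F \in H^n_{[Z]}(K[x])$, this says that $\tau$ is annihilated by the whole ideal $I$.

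Second, I would appeal to the structural fact that the $I$-torsion submodule of $H^n_{[Z]}(K[x])$ is precisely the cyclic $K[x]$-module $K[x]\cdot\sigma_F$. One inclusion is immediate: every polynomial multiple of $\sigma_F$ is annihilated by $I$, and by $\cA^{(0)}=I$ the $K[x]$-module $K[x]\cdot\sigma_F$ is isomorphic to $K[x]/I$, hence has $K$-dimension $\dim_K K[x]/I$. The reverse inclusion is Grothendieck local duality for the zero-dimensional Gorenstein ring $K[x]/I$, which identifies the $I$-annihilator in $H^n_{[Z]}(K[x])$ with $\Hom_K(K[x]/I,K)$ and therefore gives the same finite $K$-dimension, so the inclusion is an equality. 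Applying this description to $\tau$ produces a polynomial $c' \in K[x]$ with $P'\bl\sigma_F = c'\bl\sigma_F$, and then $c := -c'$ fulfills $(P'+c)\bl\sigma_F = 0$, which is exactly the desired conclusion.

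The main obstacle is the identification in the second step. If invoking local duality wholesale feels too heavy for this recursive algorithmic setting, one can instead localize via the primary decomposition $\sigma_F = \sum_\lambda \sigma_{F,\lambda}$ and argue on each summand $H^n_{[Z_\lambda]}(K[x])$ separately: the identity $H^n_{[Y]}(K[x]) = D_n\bl\delta_Y$ already recalled in Section~\ref{sec:3}, together with the finite-dimensional Noether space structure developed in Section~3, pins down the $I$-torsion of each local piece as the polynomial multiples of the corresponding delta class, and summing over $\lambda$ recovers $K[x]\cdot\sigma_F$. With that description in hand, the lemma collapses to the three-line commutator computation of the first paragraph.
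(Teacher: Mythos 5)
Your main argument is correct, and its opening coincides with the paper's: expand $[P',f_k]\bl\sigma_F$, use $f_k\bl\sigma_F=0$ (i.e. $F\subset\cA^{(0)}=I$), and conclude $f_k\bl(P'\bl\sigma_F)=0$ for every $k$. Where you genuinely diverge is in the key step, namely showing that a class annihilated by all the $f_k$ lies in $K[x]\,\sigma_F$. The paper argues directly on a representative: it writes $P'\bl\sigma_F=\bigl[\tfrac{-c(x)}{f_1^{\ell_1}\cdots f_n^{\ell_n}}\bigr]$ in reduced (``simplified'') form and deduces from $f_k\bl(P'\bl\sigma_F)=0$ that every exponent $\ell_k$ equals $1$, so the class is $-c(x)\bl\sigma_F$ with an explicit numerator $c$; this explicitness is what Algorithm~\ref{alg:annih2} later exploits, since the lemma is invoked there constructively and $c$ is recovered by a syzygy computation. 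You instead identify the submodule of $H^n_{[Z]}(K[x])$ annihilated by $I$ with $K[x]\,\sigma_F$ by a dimension count: the inclusion of $K[x]\,\sigma_F$ in that submodule together with $\cA^{(0)}=I$ gives $K$-dimension $\dim_K K[x]/I$ on one side, and Matlis/Grothendieck local duality gives the same dimension on the other. This is a clean, structural existence proof for $c$ (Gorenstein-ness is not even needed for the dimension count), but it is non-constructive and imports duality, whereas the paper's manipulation of representatives stays within the calculus it needs anyway for the algorithm. One caveat: the fallback sketch in your last paragraph is not correct as stated --- the polynomial multiples of $\delta_{Z_\lambda}$ are exactly the classes killed by $\sqrt{I_\lambda}$ and form only a $d_\lambda$-dimensional space, so summing over $\lambda$ yields dimension $\dim_K K[x]/\sqrt{I}$, not $\dim_K K[x]/I$; a componentwise version of your argument would need the span of the full Noether operator basis applied to $\delta_{Z_\lambda}$, not just polynomial multiples of the delta class. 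Since your primary route via duality is sound, this does not affect the validity of the proposal.
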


\begin{proof}
We can suppose $P'\bl \sigma_F \ne 0$ without loss of generality.
Since $f_1,f_2,\ldots,f_n$ are zeroth order annihilators, we have
\[
0 = [P',f_k] \bl \sigma_F = P' \bl (f_k \bl \sigma_F) - f_k \bl (P' \bl \sigma_F) = - f_k \bl (P' \bl \sigma_F). 
\]
Now we put
\[
\left[
\frac{-c(x)}{f_1^{\ell_1}f_2^{\ell_2}\cdots f_n^{\ell_n}} 
\right]
= P' \bl \sigma_F
\]
as a simplified expression.
Here $c(x)\ne 0$.
Then it follows from $f_k \bl (P' \bl \sigma_F)=0$ that $\ell_k = 1$.
It implies
\[
\left[\frac{-c(x)}{f_1f_2\cdots f_n}\right] = P' \bl \sigma_F.
\]
Then $P'+c(x)$ is an annihilator of $\sigma_F$.
\end{proof}

Next, we discuss first order annihilators.
Let $P = \sum_{i=1}^n
a_i(x)\pd_i + c(x)\in \ann_{D_n}(\sigma_F)$.
Since $f_k \in F$ is an annihilator,
a commutator $[P, f_k] = P f_k - f_k P$ is also a zeroth order annihilator.
It implies $\sum_{i=1}^n a_i(x)\frac{\pd f_k}{\pd x_i}\in I$.  In other words, there exist
polynomials $a_i(x), b_{ij}(x)\in K[x]$ such that
\begin{equation}\label{eq:5.1}
\sum_{i=1}^n a_i(x)
\left(\sum_{k=1}^n\frac{\pd f_k}{\pd x_i}\e_k\right)
+
\sum_{i=1}^n
\sum_{j=1}^n
b_{ij}(x)f_i(x)\e_j = \0,
\end{equation}
where $\e_j$ is the $j$-th fundamental vector.
Let
\[
\cM^{(1)} =\left\{ \left.\sum_{k=1}^n\frac{\pd f_k}{\pd x_i}\e_k \, \right| \,
i=1,2,\ldots,n \right\}
\cup
\{ f_i(x)\e_j \mid i,j=1,2,\ldots,n \}.
\]
The solutions $(a_i; b_{ij})$ of the system~$(\ref{eq:5.1})$ form
the syzygy module $\cS$ with respect to $\cM^{(1)}$.
When $(a_i)=\0$, the operator $P=c(x)$ is a zeroth order annihilator.
Therefore we can ignore the case of $(a_i)=\0$.

Suppose $(a_i)\ne\0$.
it follows
from direct calculation that  
$\sum_{i=1}^na_i \pd_i - \sum_{i=1}^n b_{ii} \in \ann_{D_n}(\sigma_F)$.
Notice that $P+p(x)$ is also an annihilator if $p(x)\in I$.
We may choose $c(x) = (- \sum_{i=1}^n b_{ii} \mod I)$ in $L$.
Hence the following algorithm gives a method for computing generators of
$\ann_{D_n}(\sigma_F)$.

\begin{algorithm}[H]
\caption{generators of $\cA^{(1)}$}
\label{alg:annih1st}
\begin{algorithmic}
\REQUIRE $F=\{f_1, \ldots, f_n\}$: a regular sequence
\ENSURE $\cL^{(1)}$: generators of $\cA^{(1)}$.
\STATE $J \gets \left[\frac{\pd f_k}{\pd x_i}\right]_{ki}$: Jacobi matrix
\STATE $E \gets \text{unit matrix of size $n$}$
\STATE $M \gets [ J \, | \, f_1 E \, | \, \cdots \, | \, f_n E ]$: combined matrix
\STATE $S \gets \text{(\groebner basis of syzygy module of columns vectors of $M$)}$
\STATE $\cL^{(1)} \gets F$
\FORALL{$(a_i(x); b_{ij}(x))\in S$}
\IF{$(a_i(x))\ne \0$}
\STATE $c \gets - \sum_{i=1}^n b_{ii} \mod \ag{F}$
\STATE $\cL^{(1)} \gets \cL^{(1)} \cup \{\sum_{i=1}^n a_i(x)\pd_i + c \}$
\ENDIF
\ENDFOR
\RETURN $\cL^{(1)}$
\end{algorithmic}
\end{algorithm}

Next, we discuss $r$-th order annihilators.  
Let $\{P_1,P_2,\ldots,P_q\}$ be a basis of $\cA^{(r-1)}$
and let $P = \sum_{1\le |\alpha| \le r} a_\alpha(x) \pd^{\alpha} + c(x) \in \ann_{D_n}(\sigma_F)$.
If $f\in F$, the commutator $[P,f]=[P-c,f]$ is at most $(r-1)$-th order annihilator.
Then there exist polynomials $b_{k1}, \ldots, b_{kq}$ such that
\[
\sum_{1\le |\alpha| \le r} a_\alpha [\pd^{\alpha},f_k] + \sum_{j=1}^q b_{kj} P_j = 0.\qquad (1\le k\le n)
\]
Using expansions $[\pd^{\alpha},f_k] = \sum_{0\le |\beta| < r} c_{\alpha k\beta}\pd^{\beta}$ and
$P_j = \sum_{0\le |\beta| < r} d_{j \beta}\pd^{\beta}$,
we have 
\[
\sum_{0\le |\beta| < r} 
\left( \sum_{1\le |\alpha| \le r} a_\alpha c_{\alpha k\beta} + \sum_{j=1}^q b_{kj} d_{j \beta}\right)
\pd^{\beta}
 = 0.\qquad (1\le k\le n)
\]
Thus
\[
\sum_{1\le |\alpha| \le r} a_\alpha c_{\alpha k\beta} + \sum_{j=1}^q b_{kj} d_{j \beta}
 = 0.\qquad (1\le k\le n, 0\le |\beta| < r)
\]
Set $M=\binom{n+r-1}{n}$.
Using a bijection $\varphi: \{k \mid 1\le k\le n\}\times \{\beta \mid 0\le |\beta|<r\} \to \{1,2,\ldots,nM\}$,
we can rewrite the system as follows:
\begin{equation}\label{eq:4.2}
\sum_{1\le |\alpha| \le r} a_\alpha
\biggl\{
\sum_{k=1}^n\sum_{0\le |\beta|<r}
c_{\alpha k\beta}\e_{\varphi(k,\beta)}
\biggr\}
+ 
\sum_{j=1}^q \sum_{k=1}^n b_{kj}
\biggl\{
\sum_{0\le |\beta|<r} d_{j \beta}\e_{\varphi(k,\beta)}
\biggr\}
= \0,
\end{equation}
where $\e_{\varphi(k,\beta)}$ are fundamental vectors of dimension $nM$.
Put 
\begin{eqnarray}
&&
\cM^{(r)} =
\left\{
\left.
\sum_{k=1}^n\sum_{0\le |\beta|<r} c_{\alpha k\beta}\e_{\varphi(k,\beta)}
\ 
\right|
\ 
1\le |\alpha| \le r
\right\}
\nonumber
\\
&&
\qquad\qquad
\cup
\left\{
\left.
\sum_{0\le |\beta|<r} d_{j \beta}\e_{\varphi(k,\beta)}
\ 
\right|
\ 
1\le j\le q, 1\le k\le n
\right\}.
\label{eq:4.3}
\end{eqnarray}
Using a \groebner basis of the syzygy module with respect to $\cM^{(r)}$,
we can solve the equation $(\ref{eq:4.2})$.
A solution $(a_\alpha; b_{kj})$ gives the main part $P'
 = \sum_{1\le |\alpha| \le r} a_\alpha(x) \pd^{\alpha}
$ of an $r$-th order annihilator $P$ if there exists $\gamma\in \N_0^n$ such that $a_\gamma \ne 0$ and $|\gamma|=r$.

Finally, we complete the $r$-th order annihilator $P=P'+c(x)$ from the main part $P'$.
Put 
\[
\frac{s(x)}{f_1^{\ell_1}f_2^{\ell_2}\cdots f_n^{\ell_n}} = P' \bl \frac{1}{f_1f_2\cdots f_n}
\]
as an irreducible fraction.  
From Lemma~\ref{lemma:4.1}, as an algebraic local cohomology class, we have 
\[
\left[
\frac{s(x)}{f_1^{\ell_1}f_2^{\ell_2}\cdots f_n^{\ell_n}} 
\right]
=
\Biggl[
\frac{-c(x)}{f_1f_2\cdots f_n}
\Biggr]
=
\left[
\frac{-c(x)f_1^{\ell_1-1}f_2^{\ell_2-1}\cdots f_n^{\ell_n-1}}{f_1^{\ell_1}f_2^{\ell_2}\cdots f_n^{\ell_n}} 
\right].
\]
It implies $s(x) + c(x)f_1^{\ell_1-1}f_2^{\ell_2-1}\cdots f_n^{\ell_n-1} 
\in \ag{f_1^{\ell_1}, f_2^{\ell_2}, \ldots, f_n^{\ell_n}}$.
Using the syzygy method again, the polynomial $c(x)$ can be determined.

\begin{algorithm}[H]
\caption{generators of $\cA^{(r)}$}
\label{alg:annih2}
\begin{algorithmic}
\REQUIRE $r\ge 2$, $F=\{f_1, \ldots, f_n\}$: a regular sequence, $\cL^{(r-1)}$: generators of $\cA^{(r-1)}$
\ENSURE $\cL^{(r)}$: generators of $\cA^{(r)}$.
\STATE $g\gets f_1 f_2 \cdots f_n$
\STATE $\cM^{(r)} \gets$ (the right hand side of the equation $(\ref{eq:4.3})$ by $r$, $F$, $\cL^{(r-1)}$)
\STATE $S \gets$ (\groebner basis of the syzygy module with respect to $\cM^{(r)}$)
\STATE $\cL^{(r)} \gets \cL^{(r-1)}$
\FORALL{$(a_\alpha; b_{kj})\in S$}
\STATE $P' \gets \sum_{1\le |\alpha|\le r} a_\alpha\pd^{\alpha}$
\IF{$\ord(P')=r$}
\STATE $h \gets$ (the result by Lemma~$\ref{lemma:4.1}$)
\STATE $\cL^{(r)} \gets \cL^{(r)} \cup \{P' - h\}$
\ENDIF
\ENDFOR
\RETURN $\cL^{(r)}$
\end{algorithmic}
\end{algorithm}

\begin{example}\label{ex:4.2}
Let $f_1=(x^2-2)(x^4-4x^2-y^4-5y^3-9y^2-7y+2)$ and $f_2=x^4-4x^2-y^3-3y^2-3y+3$.
The sequence $F=\{f_1,f_2\}$ is regular.
Using Algorithm~\ref{alg:annih1st}, a basis of $\cA^{(1)}$ 
of $\sigma_F = [\frac{1}{f_1f_2}]$
can be computed as follows:
\begin{eqnarray*}
&&
f_1, \quad f_2, \quad
3(y+1)^3{\pd_x}+4(y+1)x(x^2-2){\pd_y}+22x(x^2-2),\\
&&
3(x^2-2) {\pd_x}+4(y+1)x {\pd_y}+34 x,
\quad
(y+1)^5{\pd_y}-3(x^2-2)^2+(y+1)^3(7y+10),\\
&&
\{(y+1)^3-(x^2-2)^2\}{\pd_y}+3(y+1)^2
,
\quad
(y+1)(x^2-2)\{(y+1){\pd_y}+4\}
\end{eqnarray*}

Remark that second (or higher) order annihilators are not necessary for computing the Grothendieck local residue mapping in this case.
However they can be calculated 
by using Algorithm~\ref{alg:annih2}
as follows:
\begin{eqnarray*}
&&
\{7(x^2-2)^2-4(y+1)^3\}{\pd_y}^2-66(y+1), 
\quad
(y+1)^3(x^2-2){\pd_y}^2, 
\\
&&
(y+1)^6{\pd_y}^2,
\quad
21(y+1)^2(x^2-2){\pd_x}{\pd_y}+5x(y+1)^3{\pd_y}^2+604x(y+1),
\\
&&
(y+1)^5 {\pd_x}{\pd_y},
\quad
21 (y+1)^4 {\pd_x}^2+16 (y+1)^3{\pd_y}^2.
\end{eqnarray*}
\end{example}

\section{Differential operator $T_{F, \lambda}$ on an irreducible component}\label{sec:6}

As we have discussed, the algebraic local cohomology class $\sigma_F$ can be decomposed to the direct sum
\[
\sigma_F = \sigma_{F,1}+\cdots+\sigma_{F,\lambda}+\cdots+\sigma_{F,\ell}
\]
and each $\sigma_{F, \lambda} \in H_{Z_\lambda}(K[x])$
can be represented as $\sigma_{F, \lambda} = T_{F, \lambda}^* \bl
\delta_{F, \lambda}$.
In this section, we discuss the relation between the differential
operator $T_{F,\lambda}^*$ and Noether space of $I_\lambda$, 
and describe a method for computing $T_{F, \lambda}^*$ without the use
of an explicit representative element of $\sigma_{F,\lambda}$.

From the definition of delta functions, the polynomial ideal $\sqrt{I_\lambda}$ annihilates $\delta_{Z_\lambda}$, that is, 
$h\delta_{Z_\lambda}=0$ for any $h\in \sqrt{I_\lambda}$.
Then we have the following lemma.

\begin{lem}
$\ann_{D_n}(\delta_{Z_\lambda}) = D_n \sqrt{I_\lambda}$.
\end{lem}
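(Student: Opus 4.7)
The plan is to prove the two inclusions separately. The containment $D_n\sqrt{I_\lambda}\subseteq\ann_{D_n}(\delta_{Z_\lambda})$ is immediate from the remark just above the statement: every $h\in\sqrt{I_\lambda}$ satisfies $h\,\delta_{Z_\lambda}=0$, so the left ideal generated by $\sqrt{I_\lambda}$ is contained in the annihilator.

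For the reverse inclusion, the easy direction provides a $D_n$-linear surjection
\[
\pi:\ D_n/D_n\sqrt{I_\lambda}\ \twoheadrightarrow\ D_n\bl\delta_{Z_\lambda}\ =\ H_{[Z_\lambda]}^n(K[x]),
\]
where the last equality is the cited Lemma~3.2 of~\cite{OharaTajima}. Its kernel is $\ann_{D_n}(\delta_{Z_\lambda})/D_n\sqrt{I_\lambda}$, so it suffices to prove that $\pi$ is injective.

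To do so, I would compare both sides as modules over the field $K_\lambda=K[x]/\sqrt{I_\lambda}$. Using the order filtration on $D_n$, the graded ideal associated to $D_n\sqrt{I_\lambda}$ inside $\operatorname{gr}(D_n)\cong K[x][\xi_1,\ldots,\xi_n]$ is the extension $\sqrt{I_\lambda}\cdot K[x][\xi]$, so $\operatorname{gr}(D_n/D_n\sqrt{I_\lambda})\cong K_\lambda[\xi_1,\ldots,\xi_n]$; hence $D_n/D_n\sqrt{I_\lambda}$ has the $K_\lambda$-basis $\{\,\overline{\pd^\alpha}\mid\alpha\in\N_0^n\,\}$ in normal form. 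On the target side, the family $\{\pd^\alpha\bl\delta_{Z_\lambda}\}$ spans $H_{[Z_\lambda]}^n(K[x])$ as a $K_\lambda$-module, as follows from the standard representative description of algebraic local cohomology at a prime ideal. I would check $K_\lambda$-linear independence by passing to the algebraic closure: over $\bar K$ the class $\delta_{Z_\lambda}$ decomposes as a sum of ordinary point delta functions $\delta_p$ at the points $p\in Z_\lambda$, whose iterated derivatives $\{\pd^\alpha\delta_p\}$ are classically linearly independent over $\bar K$, and the Galois-equivariance of this decomposition descends the independence to $K_\lambda$. Since $\pi$ then carries a $K_\lambda$-basis to a $K_\lambda$-basis, it is an isomorphism.

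The main obstacle is the Galois-descent bookkeeping in the last step, since $Z_\lambda$ may consist of several $\bar K$-points rather than a single $K$-rational one. This can be bypassed by invoking holonomic $D$-module theory directly: $D_n/D_n\sqrt{I_\lambda}$ is a simple $D_n$-module because $\sqrt{I_\lambda}$ is a maximal ideal of $K[x]$, so any nonzero $D_n$-linear surjection from it, and in particular $\pi$, is automatically an isomorphism, which yields the reverse inclusion.
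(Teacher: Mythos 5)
The paper states this lemma without proof, so there is no in-paper argument to compare against; judged on its own, your proof is correct in substance and is the standard one, and it usefully fills the gap. The containment $D_n\sqrt{I_\lambda}\subseteq \ann_{D_n}(\delta_{Z_\lambda})$ is immediate as you say, and reducing the reverse containment to injectivity of $\pi$ is the right move. Three points deserve tightening. First, the claim that the graded ideal associated to $D_n\sqrt{I_\lambda}$ in $K[x][\xi_1,\ldots,\xi_n]$ is exactly the extended ideal $\sqrt{I_\lambda}\,K[x][\xi]$ is true but not automatic (a priori it could be strictly larger); it is cleaner to avoid the graded ring altogether: writing operators in the normal form $\sum_\alpha \pd^\alpha c_\alpha(x)$ with coefficients on the right, right multiplication by a polynomial only multiplies those coefficients, so $D_n\sqrt{I_\lambda}=\{\sum_\alpha \pd^\alpha c_\alpha(x) \mid c_\alpha\in\sqrt{I_\lambda}\}$, and the classes of the $\pd^\alpha$ form a free right $K_\lambda$-basis of $D_n/D_n\sqrt{I_\lambda}$ directly. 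Second, the phrase ``spans $H^n_{[Z_\lambda]}(K[x])$ as a $K_\lambda$-module'' is loose, since the $K[x]$-action on the whole local cohomology module does not factor through $K_\lambda$; what you actually use is the $K$-span of the classes $\pd^\alpha\bl(x^{\gamma}\delta_{Z_\lambda})$ with $x^\gamma$ running over a monomial basis of $K_\lambda$, and your independence check over $\C$ does go through: the Jacobian factor in the paper's definition of $\delta_{Z_\lambda}$ makes each local component at $p\in Z_\lambda$ an order-zero point delta $\delta_p$, so $x^\gamma\delta_p=p^\gamma\delta_p$, and an element of $K_\lambda$ killed by one embedding into $\C$ is zero, which kills the coefficients. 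Third, the closing shortcut is logically fine (a surjection from a simple module onto a nonzero module is an isomorphism), but the simplicity of $D_n/D_n\sqrt{I_\lambda}$ when $Z_\lambda$ is not a single $K$-rational point is precisely the descent content you were trying to bypass: it follows from Kashiwara's equivalence for the smooth zero-dimensional $Z_\lambda$, or from the same Galois-orbit argument (over $\C$ the module splits into pairwise non-isomorphic point-delta modules permuted transitively), so it should be cited or proved rather than asserted as a consequence of maximality alone.
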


If an algebraic local cohomology class $\eta \in H_{[Z_\lambda]}^n(K[x])$ can be represented as
$\eta = U_1^*\bl \delta_{Z_\lambda}$ and $\eta = U_2^*\bl \delta_{Z_\lambda}$
by using two operators $U_1^*, U_2^* \in D_n$, then
$(U_1^* - U_2^*) \bl \delta_{Z_\lambda}=0$.
It implies $U_1^* - U_2^* \in D_n \sqrt{I_\lambda}$.
Hence we should identify $U_1^*$ with $U_2^*$ as an equivalent class in $(D_n \bmod D_n\sqrt{I_\lambda})$.

\begin{algorithm}
\caption{Calculation for $S_\lambda^*$}
\label{alg:4}
\begin{algorithmic}
\REQUIRE $I_\lambda$: a zero-dimensional primary ideal, $G$: a \groebner basis of $\sqrt{I_\lambda}$
\ENSURE $S_\lambda^*$: a differential operator, $r$: maximal order of annihilators.
\STATE $m \gets$ (the multiplicity of $I_\lambda$)
\STATE $d \gets \dim_K K_\lambda$
\STATE $\{R_1=1, R_2, \ldots, R_m\} \gets \text{(the result by Algorithm~\ref{alg:noether3})}$
\STATE $B=\{1,x^{\gamma_2}, \ldots, x^{\gamma_{d}}\} \gets$ (standard monomials with respect to $G$)
\FOR{$i=1$ \TO $m-1$}
\STATE $s_i \gets c_{i, 1}+c_{i, 2}x^{\gamma_2}+\cdots+c_{i, d}x^{\gamma_d}$, where $c_{i,1}, c_{i,2}, \ldots, c_{i,d}$ are symbols
\ENDFOR
\STATE $S_\lambda^* \gets R_m+\sum_{i=1}^{m_\lambda-1} R_i s_i$
\STATE $\cL \gets$ (generators of $\cA^{(1)}$ by Algorithm~\ref{alg:annih1st})
\STATE $\cL'\gets \emptyset$
\STATE $E\gets \emptyset$
\STATE $A \gets \{ \alpha \in \N_0^n \mid |\alpha|\le m\}$
\FOR{$r=1,2,3,\ldots$}
\FORALL{$L \in \cL\setminus\cL'$}
\STATE $\sum_{\alpha\in A} (-\pd)^\alpha u_\alpha \gets L\, S_\lambda^*$
\STATE $E\gets E\cup \{\text{the coefficients of $(u_\alpha\!\mod \sqrt{I_\lambda})$ as a polynomial in $K[B]$} \mid \alpha\in A\}$
\ENDFOR
\STATE Solving the linear system $E$ for $U=(c_{ij})_{ij}$
\IF{no more free variables in $U$}
\STATE Assigning $U$ to $S_\lambda^*$
\RETURN $S_\lambda^*$
\ENDIF
\STATE $\cL'\gets \cL$
\STATE $\cL \gets$ (generators of $\cA^{(r+1)}$ by Algorithm~\ref{alg:annih2})
\ENDFOR
\end{algorithmic}
\end{algorithm}

Let $\varphi \in \Hom_{D_n}(D_n,D_n)$.
Consider the following diagram:
\[
\begin{CD}
\ann_{D_n}(\sigma_{F,\lambda})  @>>> D_n  @>>> D_n\bl \sigma_{F,\lambda}  @>>>  0  \\
@VVV                                @VV{\varphi}V \\
D_n \sqrt{I_\lambda}            @>>> D_n  @>>> D_n\bl \delta_{Z_\lambda} @>>> 0
\end{CD}
\]
If $P\, \varphi(1) \in D_n \sqrt{I_\lambda}$ for any $P\in \ann_{D_n}(\sigma_{F,\lambda})$, 
then we can identify $\varphi$ with an element of $\Hom_{D_n}(D_n\bl \sigma_{F_\lambda},  D_n\bl \delta_{Z_\lambda})$.
Since 
$\Hom_{D_n}(D_n\bl \sigma_{F_\lambda},  D_n\bl \delta_{Z_\lambda}) \subset \Hom_{D_n}(M_{I_\lambda}, M_{\sqrt{I_\lambda}})$
and $D_n\bl \delta_{Z_\lambda} = M_{\sqrt{I_\lambda}}$, 
we call $\varphi$ a {\it Noether operator} associated to $\sigma_{F,\lambda}$.

\begin{lem}[\cite{T2}]\label{lem:5.2}
Let $\varphi \in \Hom_{D_n}(D_n\bl\sigma_{F,\lambda},  D_n \bl\delta_{Z_\lambda})\setminus \{0\}$ and let $S^*$ be the corresponding Noether operator to $\varphi$.
Then, as $K$-vector spaces, we have
\[
\Hom_{D_n}(D_n\bl\sigma_{F,\lambda}, H^n_{[Z_\lambda]}(K[x])) \simeq \myspan_K \{
S^* h \bl \delta_{Z_\lambda} \mid h \in K[x]/\sqrt{I_\lambda}
\}.
\]
\end{lem}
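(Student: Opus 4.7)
The plan is to realize the left-hand Hom space as a cyclic right $K[x]/\sqrt{I_\lambda}$-module generated by $\varphi$, and then to transport that description through evaluation $\psi \mapsto \psi(\sigma_{F,\lambda})$ into the right-hand span.

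First I would note that, because $D_n\bl\sigma_{F,\lambda}$ is cyclic as a $D_n$-module, evaluation at its generator gives an injective $K$-linear map
\[
\Hom_{D_n}(D_n\bl\sigma_{F,\lambda},\, H^n_{[Z_\lambda]}(K[x])) \hookrightarrow H^n_{[Z_\lambda]}(K[x]), \qquad \psi \mapsto \psi(\sigma_{F,\lambda}),
\]
whose image is $\{u \in H^n_{[Z_\lambda]}(K[x]) \mid P\bl u = 0 \text{ for all } P\in\ann_{D_n}(\sigma_{F,\lambda})\}$. Using $H^n_{[Z_\lambda]}(K[x]) = D_n\bl\delta_{Z_\lambda}$ and the preceding lemma $\ann_{D_n}(\delta_{Z_\lambda}) = D_n\sqrt{I_\lambda}$, each such $u$ is represented by a Noether operator $W^*\in D_n/D_n\sqrt{I_\lambda}$ satisfying $\ann_{D_n}(\sigma_{F,\lambda})\cdot W^* \subseteq D_n\sqrt{I_\lambda}$. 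For the inclusion $\supseteq$: given any $h\in K[x]$, the operator $S^*h$ still satisfies this Noether condition because $\sqrt{I_\lambda}$ is an ideal of the commutative ring $K[x]$, so $(D_n\sqrt{I_\lambda})h = D_n(h\sqrt{I_\lambda})\subseteq D_n\sqrt{I_\lambda}$; moreover $S^*h\bl\delta_{Z_\lambda}$ depends only on $h\bmod\sqrt{I_\lambda}$. Thus $\myspan_K\{S^*h\bl\delta_{Z_\lambda}\mid h\in K[x]/\sqrt{I_\lambda}\}$ is contained in the image of evaluation.

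For the opposite inclusion I would invoke the simplicity of $H^n_{[Z_\lambda]}(K[x])$ as a holonomic $D_n$-module supported on the irreducible variety $Z_\lambda$: any nonzero $D_n$-submodule coincides with it, so in particular $D_n\bl\sigma_{F,\lambda} = H^n_{[Z_\lambda]}(K[x])$ and the Hom space identifies with $\operatorname{End}_{D_n}(H^n_{[Z_\lambda]}(K[x]))$. A direct calculation in $D_n/D_n\sqrt{I_\lambda}$ then shows that this endomorphism ring is exactly $K[x]/\sqrt{I_\lambda}$, acting by right multiplication; hence the Hom is a free right $K[x]/\sqrt{I_\lambda}$-module of rank one, and since $\varphi\ne 0$ it is generated by $\varphi$. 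Every $\psi$ is therefore of the form $\varphi\cdot h$ and evaluates to $\psi(\sigma_{F,\lambda}) = S^*h\bl\delta_{Z_\lambda}$, completing the reverse inclusion.

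The step I expect to be the main obstacle is establishing the structural facts in the last paragraph: the simplicity of $H^n_{[Z_\lambda]}(K[x])$ as a holonomic $D_n$-module, and the identification $\operatorname{End}_{D_n}(H^n_{[Z_\lambda]}(K[x])) \simeq K[x]/\sqrt{I_\lambda}$. Both are standard over $\C$ but require care over a general subfield $K$, particularly when $K_\lambda = K[x]/\sqrt{I_\lambda}$ is a proper extension of $K$; this is presumably where the cited result from \cite{T2} enters.
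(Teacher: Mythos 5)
Your argument cannot be checked against an in-text proof, because the paper states Lemma~\ref{lem:5.2} as a quoted result from \cite{T2} and gives no proof; judged on its own, your proposal is essentially sound. The first half (evaluation at the cyclic generator $\sigma_{F,\lambda}$, identifying the image with the classes $W^*\bl\delta_{Z_\lambda}$ satisfying $\ann_{D_n}(\sigma_{F,\lambda})\,W^*\subseteq D_n\sqrt{I_\lambda}$, and observing that this condition is stable under right multiplication by $K[x]$ so that $\myspan_K\{S^*h\bl\delta_{Z_\lambda}\}$ lands in the image) is exactly the discussion the paper itself places around the commutative diagram preceding the lemma, using $\ann_{D_n}(\delta_{Z_\lambda})=D_n\sqrt{I_\lambda}$. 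Of the two structural facts you flag at the end, the identification $\operatorname{End}_{D_n}(D_n\bl\delta_{Z_\lambda})\simeq K_\lambda$ acting by right multiplication need not be proved from scratch: it is the paper's Lemma~3.3 applied with $J=\sqrt{I_\lambda}$, where the multiplicity is $m=1$, so the Noether space $\Hom_{D_n}(M_{\sqrt{J}},M_{\sqrt{J}})$ is one-dimensional over $K_\lambda$ and is spanned by the identity (you should still note the faithfulness $h\bl\delta_{Z_\lambda}=0\iff h\in\sqrt{I_\lambda}$, which is immediate from the same Lemma~5.1).

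The genuinely load-bearing step you leave open is the equality $D_n\bl\sigma_{F,\lambda}=H^n_{[Z_\lambda]}(K[x])$, which you derive from simplicity of $H^n_{[Z_\lambda]}(K[x])$ over $D_n$ for a general subfield $K\subset\C$. That fact is true and fillable: after base change to $\C$ the module becomes $\bigoplus_{\beta\in Z_\lambda}H^n_{[\beta]}(\C[x])$, a sum of pairwise non-isomorphic simple modules, so every submodule is a subsum, and the transitive Galois action on the conjugate points forces any $K$-rational submodule to be $0$ or everything; one also needs $\sigma_{F,\lambda}\neq 0$. However, you can bypass simplicity entirely with material already quoted in the same section: the relation $J_F\bl\sigma_{F,\lambda}=m_\lambda\,\delta_{Z_\lambda}$ underlying the theorem attributed to \cite{T2,TN2} gives at once $\sigma_{F,\lambda}\neq 0$ and $\delta_{Z_\lambda}\in D_n\bl\sigma_{F,\lambda}$ (characteristic zero, $m_\lambda\neq 0$), hence $H^n_{[Z_\lambda]}(K[x])=D_n\bl\delta_{Z_\lambda}\subseteq D_n\bl\sigma_{F,\lambda}$, after which your endomorphism computation and the evaluation $\psi(\sigma_{F,\lambda})=S^*h\bl\delta_{Z_\lambda}$ go through unchanged. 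With either repair your proof is complete; whether it coincides with the route of \cite{T2} cannot be determined from the paper.
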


Our target $T_{F,\lambda}^*$ can be regarded as a Noether operator associated to the local cohomology class $\sigma_{F,\lambda}$ and can be written as $T_{F,\lambda}^* = S^* h$.

Let $J_F=\det\left(\frac{\pd(f_1, \ldots, f_n)}{\pd(x_1, \ldots,
x_n)}\right)$ be the Jacobian of the regular sequence $F$
and let $m_\lambda$ be the multiplicity of $I_\lambda$.

\begin{theorem}[\cite{T2, TN2}]
Let $\tau \in H_{[Z_\lambda]}^n(K[x])$.  If
$\ann_{D_n}(\sigma_F)$
annihilates $\tau$ and $J_F \bl \tau =
m_\lambda \delta_{F, \lambda}$, then $\tau=\sigma_{F, \lambda}$ holds.
\end{theorem}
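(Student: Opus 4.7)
The plan is to study the difference $\eta := \tau - \sigma_{F,\lambda}$ in $H_{[Z_\lambda]}^n(K[x])$ and show it must vanish. First I would verify that $\sigma_{F,\lambda}$ itself satisfies the two hypotheses placed on $\tau$. Because the decomposition $\sigma_F = \sigma_{F,1}+\cdots+\sigma_{F,\ell}$ is indexed by pairwise disjoint supports $Z_\mu$, it is in fact a direct-sum decomposition of $D_n$-modules, so any operator annihilating $\sigma_F$ annihilates each $\sigma_{F,\mu}$ separately; in particular $\ann_{D_n}(\sigma_F)\bl \sigma_{F,\lambda}=0$. The companion identity $J_F \bl \sigma_{F,\lambda}=m_\lambda\delta_{Z_\lambda}$ is the classical transformation law recorded in~\cite{T2,TN2}. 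Subtracting, $\eta$ lies in $H_{[Z_\lambda]}^n(K[x])$, is annihilated by $\ann_{D_n}(\sigma_F)$, and satisfies $J_F\bl\eta=0$.

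Next I would identify $\eta$ through the Noether-operator structure. The rule $1\mapsto \eta$ defines a $D_n$-homomorphism $D_n/\ann_{D_n}(\sigma_F)\cong D_n\bl\sigma_F \to H_{[Z_\lambda]}^n(K[x])$ sending $\sigma_F \mapsto \eta$. Under the decomposition $D_n\bl\sigma_F=\bigoplus_\mu D_n\bl\sigma_{F,\mu}$, each summand with $\mu\ne \lambda$ is supported on $Z_\mu$ and therefore admits no nonzero $D_n$-homomorphism into $H_{[Z_\lambda]}^n(K[x])$. Hence the map factors through $D_n\bl\sigma_{F,\lambda}$ and sends $\sigma_{F,\lambda}\mapsto \eta$. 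Applying Lemma~\ref{lem:5.2} with the distinguished Noether operator $S^*=T_{F,\lambda}^*$ yields $h\in K[x]/\sqrt{I_\lambda}$ for which $\eta=T_{F,\lambda}^* h\bl\delta_{Z_\lambda}$.

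Finally I would exploit $J_F\bl\eta=0$. The identity $J_F\bl\sigma_{F,\lambda}=m_\lambda\delta_{Z_\lambda}$ rewrites as $J_F T_{F,\lambda}^* - m_\lambda \in \ann_{D_n}(\delta_{Z_\lambda})=D_n\sqrt{I_\lambda}$. Since polynomials commute with elements of $\sqrt{I_\lambda}$, right-multiplication by $h\in K[x]$ preserves $D_n\sqrt{I_\lambda}$, so $J_F T_{F,\lambda}^* h \equiv m_\lambda h \pmod{D_n\sqrt{I_\lambda}}$, and evaluating at $\delta_{Z_\lambda}$ gives $J_F\bl\eta = m_\lambda h\delta_{Z_\lambda}$. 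The hypothesis forces $m_\lambda h\delta_{Z_\lambda}=0$; using $m_\lambda\ne 0$ in characteristic zero together with $\ann_{K[x]}(\delta_{Z_\lambda})=\sqrt{I_\lambda}$ (which holds because $\sqrt{I_\lambda}$ is maximal, already annihilates $\delta_{Z_\lambda}\ne 0$), we obtain $h=0$ in $K[x]/\sqrt{I_\lambda}$. Therefore $\eta=T_{F,\lambda}^* h\bl\delta_{Z_\lambda}=0$, and $\tau=\sigma_{F,\lambda}$.

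The step that will require the most care is the factoring argument: one must verify cleanly that $D_n\bl\sigma_F=\bigoplus_\mu D_n\bl\sigma_{F,\mu}$ is a decomposition of $D_n$-modules, not merely of $K$-vector spaces, and that $\Hom_{D_n}$ into $H_{[Z_\lambda]}^n(K[x])$ vanishes on summands supported on the disjoint varieties $Z_\mu$. Both facts are geometrically transparent but need a precise algebraic formulation in the local-cohomology setting.
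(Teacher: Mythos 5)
The paper itself does not prove this theorem---it is imported verbatim from \cite{T2,TN2}---so there is no in-paper argument to compare against; what can be judged is whether your reconstruction is sound, and in outline it is. Your reduction to $\eta=\tau-\sigma_{F,\lambda}$, the passage through Lemma~\ref{lem:5.2} to write $\eta=T_{F,\lambda}^*\,h\bl\delta_{Z_\lambda}$, and the final step $J_F T_{F,\lambda}^*h\equiv m_\lambda h \pmod{D_n\sqrt{I_\lambda}}$ forcing $h\in\sqrt{I_\lambda}$ (using that $\sqrt{I_\lambda}$ is maximal and $\delta_{Z_\lambda}\neq 0$, and $m_\lambda\neq0$ in characteristic zero) are all correct and consistent with the paper's toolkit, in particular with the lemma $\ann_{D_n}(\delta_{Z_\lambda})=D_n\sqrt{I_\lambda}$ and with the fact that right multiplication by a polynomial preserves $D_n\sqrt{I_\lambda}$. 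Two remarks. First, the external ingredient $J_F\bl\sigma_{F,\lambda}=m_\lambda\delta_{Z_\lambda}$ is indeed the classical trace/transformation law (multiplication by $\varphi$ on the local Artinian algebra has trace $m_\lambda\varphi(\beta)$), so invoking it is legitimate and not circular: it supplies the existence/normalization half, and the theorem as stated is exactly the uniqueness half, which is what you prove. Second, the two facts you flag but do not prove are genuinely needed and should be filled, but each takes only a line or two: for the Hom-vanishing, if $\psi$ maps a class killed by a power of $\sqrt{I_\mu}$ into $H_{[Z_\lambda]}^n(K[x])$, then $\psi(v)$ is killed both by $\sqrt{I_\mu}^N$ and by $\sqrt{I_\lambda}^M$, and these powers of distinct maximal ideals are comaximal, so $\psi(v)=0$; for the direct-sum statement, choose by the Chinese Remainder Theorem a polynomial $\chi$ with $\chi\equiv1\pmod{\sqrt{I_\lambda}^N}$ and $\chi\equiv0\pmod{\sqrt{I_\mu}^N}$ for $\mu\neq\lambda$ with $N$ large enough to kill every $\sigma_{F,\mu}$; then $\chi\bl\sigma_F=\sigma_{F,\lambda}$, so each $\sigma_{F,\mu}$ lies in $D_n\bl\sigma_F$ and $D_n\bl\sigma_F=\bigoplus_\mu D_n\bl\sigma_{F,\mu}$ as left $D_n$-modules (this also re-proves that $\ann_{D_n}(\sigma_F)$ kills each component, since the components of $P\bl\sigma_F$ in the direct sum must vanish separately). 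With these two insertions your argument is complete.
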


\begin{theorem}[\cite{T2}]
Let $h\in K[x]$.  Then the followings are equivalent.
\begin{enumerate}
\item $\sigma_{F,\lambda} = S^* h \bl \delta_{Z_\lambda}$.
\item $J_F S^* h - m_\lambda \in D_n \sqrt{I_\lambda}$.
\end{enumerate}
\end{theorem}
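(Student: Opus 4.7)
The plan is to reduce both implications to two results already established: the preceding theorem (which characterizes $\sigma_{F,\lambda}$ by the combination of the $\ann_{D_n}(\sigma_F)$-annihilator condition and the Jacobian equation $J_F\bl\tau=m_\lambda\delta_{Z_\lambda}$) together with the identification $\ann_{D_n}(\delta_{Z_\lambda})=D_n\sqrt{I_\lambda}$ from the first lemma of this section. The bridge between the two sides of the equivalence is the tautology
\[
(J_F S^* h - m_\lambda)\bl \delta_{Z_\lambda} = 0
\iff
J_F S^* h - m_\lambda \in D_n\sqrt{I_\lambda},
\]
which is just that lemma.

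For (1)$\Rightarrow$(2), I would multiply the hypothesis $\sigma_{F,\lambda}=S^* h\bl\delta_{Z_\lambda}$ by the polynomial $J_F$. The identity $J_F\bl\sigma_{F,\lambda}=m_\lambda\delta_{Z_\lambda}$ (which underlies the preceding theorem) then turns this into $J_F S^* h\bl\delta_{Z_\lambda}=m_\lambda\delta_{Z_\lambda}$, and the bridge above immediately gives $J_F S^* h-m_\lambda\in D_n\sqrt{I_\lambda}$.

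For (2)$\Rightarrow$(1), set $\tau:=S^* h\bl\delta_{Z_\lambda}\in H^n_{[Z_\lambda]}(K[x])$ and aim to invoke the preceding theorem with this $\tau$. Two conditions must be checked: \emph{(a)} $\tau$ is annihilated by $\ann_{D_n}(\sigma_F)$; \emph{(b)} $J_F\bl\tau=m_\lambda\delta_{Z_\lambda}$. Condition \emph{(b)} is the hypothesis rewritten via the bridge. For \emph{(a)}, Lemma~\ref{lem:5.2} identifies $S^* h\bl\delta_{Z_\lambda}$ as the image of $\sigma_{F,\lambda}$ under a $D_n$-homomorphism $D_n\bl\sigma_{F,\lambda}\to H^n_{[Z_\lambda]}(K[x])$, so $\ann_{D_n}(\sigma_{F,\lambda})$ annihilates $\tau$. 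Since $\sigma_F=\sum_\mu\sigma_{F,\mu}$ is a direct-sum decomposition over pairwise disjoint supports $Z_\mu$, and since this decomposition is $D_n$-stable, any $P\in\ann_{D_n}(\sigma_F)$ must kill each $\sigma_{F,\mu}$ separately; in particular $\ann_{D_n}(\sigma_F)\subseteq\ann_{D_n}(\sigma_{F,\lambda})$, and \emph{(a)} follows. The preceding theorem then concludes $\tau=\sigma_{F,\lambda}$, which is (1).

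The main obstacle is the inclusion $\ann_{D_n}(\sigma_F)\subseteq\ann_{D_n}(\sigma_{F,\lambda})$ used in \emph{(a)}: everything else is a one-line rewriting via the lemma $\ann_{D_n}(\delta_{Z_\lambda})=D_n\sqrt{I_\lambda}$, whereas this step is the only place where one must invoke the $D_n$-stability of the support-decomposition of algebraic local cohomology and the disjointness of the $Z_\mu$.
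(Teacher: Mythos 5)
The paper gives no proof of this theorem at all---it is quoted from \cite{T2}---so there is nothing in the text to compare your argument against; judged on its own terms, your reconstruction is essentially sound. For (2)$\Rightarrow$(1): $\tau=S^*h\bl\delta_{Z_\lambda}$ does lie in $H^n_{[Z_\lambda]}(K[x])$, condition \emph{(b)} is exactly hypothesis (2) rewritten through $\ann_{D_n}(\delta_{Z_\lambda})=D_n\sqrt{I_\lambda}$, and condition \emph{(a)} holds. In fact the step you single out as the main obstacle, $\ann_{D_n}(\sigma_F)\subseteq\ann_{D_n}(\sigma_{F,\lambda})$, is asserted verbatim in Section~5 (``From the decomposition of $\sigma_F$, it follows $\ann_{D_n}(\sigma_F)\bl\sigma_{F,\lambda}=0$''), so it is granted; and you can get \emph{(a)} more directly than through Lemma~\ref{lem:5.2}: since $\ann_{D_n}(\sigma_{F,\lambda})\,S^*\subset D_n\sqrt{I_\lambda}$ by the very definition of the Noether operator $S^*$, and right multiplication by the polynomial $h$ preserves $D_n\sqrt{I_\lambda}$, one has $P S^* h\bl\delta_{Z_\lambda}=0$ for every annihilator $P$. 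Your route through Lemma~\ref{lem:5.2} tacitly assumes that the isomorphism there is evaluation at $\sigma_{F,\lambda}$, which the lemma (stated only as a $K$-vector-space isomorphism) does not quite say.

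The one point to make honest is the direction (1)$\Rightarrow$(2): it rests entirely on the identity $J_F\bl\sigma_{F,\lambda}=m_\lambda\delta_{Z_\lambda}$, and this is \emph{not} a formal consequence of the preceding theorem as stated---that theorem only gives uniqueness (any $\tau$ satisfying the two conditions equals $\sigma_{F,\lambda}$), not that $\sigma_{F,\lambda}$ itself satisfies them. Indeed, given $\ann_{D_n}(\delta_{Z_\lambda})=D_n\sqrt{I_\lambda}$, statement (2) under hypothesis (1) is literally equivalent to that identity, so it cannot be avoided; it must be imported explicitly as the classical transformation law for Grothendieck residues (the content behind \cite{T2,TN2}), rather than described as ``underlying'' the displayed theorem. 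With that citation made explicit, your proof is complete.
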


Next we want to give an explicit algorithm for computing $T_{F,\lambda}^*$.

From the decomposition of $\sigma_F$, it follows $\ann_{D_n}(\sigma_F)\bl\sigma_{F,\lambda} = 0$.
It implies $\ann_{D_n}(\sigma_F)\, T_{F, \lambda}^* \subset D_n\sqrt{I_\lambda}$.
The primary ideal $I_\lambda$ has Noether
operator basis $\cR = \{R_1=1, R_2, \ldots, R_{m_\lambda}\}$,
where $m_\lambda$ is the multiplicity of $I_\lambda$.
It follows from Lemma~\ref{lem:5.2} that 
\[
T_{F,\lambda}^*\bl \delta_{Z_\lambda} \in 
\myspan_K \{R_i K_\lambda \bl \delta_{Z_\lambda} \mid i=1,2,\ldots,m_\lambda\}.
\]
Thus $
T_{F,\lambda}^* \in \bigoplus_{i=1}^{m_\lambda} R_i K_\lambda$.
It can be represented as
\[
T_{F, \lambda}^* \equiv R_{m_\lambda1}\,t_{m_\lambda}(x) + R_{m_\lambda-1}\, t_{m_\lambda-1}(x) + \cdots + R_2\, t_2(x) + t_1(x)  \pmod{D_n\sqrt{I_\lambda}}
\]
where $t_k(x)\in K_\lambda$.
We may suppose $\ord(R_i)\le \ord(R_{m_\lambda})$ for any $i<m_\lambda$ without loss of generality.
Then $t_{m_\lambda}(x)\ne 0$.

Put $S_\lambda^* = T_{F,\lambda}^*\, \cdot\, t^{-1}_{m_\lambda}(x)$.
Then it can be expressed as a ``monic'' operator:
\[
S_\lambda^* \equiv R_{m_\lambda} + R_{m_\lambda-1} s_{m_\lambda-1}(x) + \cdots + R_2 s_2(x) + s_1(x).
\pmod{D_n\sqrt{I_\lambda}}
\]
The operator $S_\lambda^*$ satisfies the equation $\ann_{D_n}(\sigma_F)S_\lambda^* \subset D_n\sqrt{I_\lambda}$.

Let $d_\lambda=\dim_K K_\lambda$ be the degree of field extension $K_\lambda$ over $K$ 
and let $G$ be a \groebner basis of $\sqrt{I_\lambda}$.
Then the finite set $B=\{ x^\gamma \mid \nf_G(x^\gamma) = x^{\gamma}\}$ is a monomial basis of the $K$-vector space $K_\lambda$.
Let $B=\{1,x^{\gamma_2},\ldots,x^{\gamma_{d_\lambda}}\}$.
We utilize the method of undetermined coefficients for finding $s_{i}(x) \in K_\lambda$.
That is, they can be written as
\[
s_i(x) = c_{i,1} + c_{i,2}x^{\gamma_2} + c_{i,3}x^{\gamma_3} + \cdots + c_{i,d_\lambda}x^{\gamma_{d_\lambda}} \in K_\lambda,
\qquad (i=1,2,\ldots,m_\lambda-1)
\]
by using undetermined coefficients
$U_1=\{c_{ij} \mid 1\le i < m_\lambda,\ 1\le j\le d_\lambda\}$.

For any $L\in \ann_{D_n}(\sigma_F)$, the product $LS_\lambda^*$ has an
expansion $LS_\lambda^* = \sum_\alpha \pd^\alpha u_\alpha(L,x)$, where
$u_\alpha(L,x)$ are polynomials written in $s_k(x)$.
From $LS_\lambda^* \equiv 0 \pmod{D_n\sqrt{I_\lambda}}$, it follows
a membership problem
$u_\alpha(L,x)\equiv 0 \pmod{\sqrt{I_\lambda}}$.
By monomial reduction with respect to $G$, the polynomials $u_\alpha(L,x)$ can be expanded as
\[
u_\alpha(L,x) \bmod \sqrt{I_\lambda} = \sum_{i=1}^{d_\lambda} v_{\alpha,i}(L)\, x^{\gamma_i}.
\]
Notice that the coefficients $v_{\alpha,i}(L)$ are affine linear forms in
undetermined coefficients $U_1$.  Accordingly, the equation $LS_\lambda^*
\equiv 0$ can be interpreted as linear equations $v_{\alpha,i}(L) = 0$
for variables $U_1$.  If the annihilating left ideal
$\ann_{D_n}(\sigma_F)$ is generated by $L_1, L_2, \ldots$, then the
relations $L_j S_\lambda^*\equiv 0$, $(j=1,2,\ldots)$ make a system of
linear equations.  Solving the linear system, we can find the values of
the undetermined coefficients.  From this procedure, we have
Algorithm~\ref{alg:4}.

Next, there exists a polynomial $h(x) = t_{m_\lambda}(x)$
such that $T_{F,\lambda}^* \equiv S_\lambda^*\,h(x)$.
We utilize the method of undetermined coefficients for finding $h(x)$ again.
It can be written as
\[
h(x) \equiv c_1 + c_2 x^{\gamma_2} + \cdots + c_{d_\lambda} x^{\gamma_{d_\lambda}} \pmod{\sqrt{I_\lambda}}
\]
by using undetermined coefficients $U_2=\{c_i \mid 1\le i\le d_\lambda\}$.

Put $P=J_F S_{F, \lambda}^* h(x)- m_\lambda$.  The operator can be
represented as $P = \sum_\alpha (-\pd)^\alpha u_\alpha(x)$.
From the theorem above, the operator annihilates the delta function
thus $u_\alpha(x)\equiv 0 \pmod{\sqrt{I_\lambda}}$.
Since the polynomials $u_\alpha(x)$ are affine linear forms in undetermined
coefficients $U_2$, we have a system of linear equations.
Solving the system, we can determine the polynomial $h(x)$.
The procedure can be executed by Algorithm~\ref{alg:5}.

\begin{algorithm}
\caption{Calculation for $T_{F, \lambda}$}
\label{alg:5}
\begin{algorithmic}
\REQUIRE $I_\lambda$: a zero-dimensional primary ideal, $G$: a \groebner basis of $\sqrt{I_\lambda}$
\ENSURE $T_{F,\lambda}^*$: a differential operator, $r$: maximal order of annihilators.
\STATE $m \gets$ (the multiplicity of $I_\lambda$)
\STATE $d \gets \dim_K K_\lambda$
\STATE $B=\{1,x^{\gamma_2}, \ldots, x^{\gamma_{d}}\} \gets$ (standard monomials with respect to $G$)
\STATE $h \gets c_{1}+c_{2}x^{\gamma_2}+\cdots+c_{d}x^{\gamma_d}$, where $c_{1}, c_{2}, \ldots, c_{d}$ are symbols

\STATE $S_\lambda^* \gets \text{(the result of Algorithm~\ref{alg:4})}$
\STATE $J_F \gets \det\left[\frac{\pd f_i}{\pd x_j}\right]_{ij}$: Jacobian
\STATE $J_{F,\lambda} \gets J_F \bmod I_\lambda$
\STATE $A \gets \{ \alpha \in \N_0^n \mid |\alpha|\le m\}$
\STATE $\sum_{\alpha\in A} (-\pd)^\alpha u_\alpha \gets J_{F,\lambda}\, S_\lambda^*\, h - m$
\STATE $E\gets \emptyset$
\FORALL{$\alpha \in A$}
\STATE $u_\alpha \gets u_\alpha \mod \sqrt{I_\lambda}$
\STATE $E\gets E\cup \{\text{coefficients of $u_\alpha$}\}$
\ENDFOR
\STATE Solving the linear system $E$ for $(c_1, c_2, \ldots, c_d)$
\STATE $T_{F, \lambda}^* \gets (\text{Assigning the solution $c_i$ to $S_\lambda^*\, h$})$
\RETURN $T_{F, \lambda}^*$
\end{algorithmic}
\end{algorithm}

\section{Local residue mapping}

As we have explained, the local residue mapping can be represented as
the set $\{(T_{F, \lambda}, Z_\lambda) \mid
\lambda=1, 2, \ldots, \ell\}$.
The irreducible set $Z_\lambda$ is the zero set of the prime ideal $\sqrt{I_\lambda}$.
In section~\ref{sec:6}, we have discussed the method for finding
the differential operator $T_{F, \lambda}^*$ for the algebraic local
cohomology class $\sigma_{F, \lambda}$.
Remark that the computation is parallelizable for each pair $(T_{F, \lambda}, Z_\lambda)$.

\begin{algorithm}[H]
\caption{Local residue mapping for $\sigma_F$}
\label{alg:6}
\begin{algorithmic}
\REQUIRE $F=\{f_1, \ldots, f_n\}$: a regular sequence
\ENSURE $T=\{(T_{F, \lambda}, \sqrt{I_\lambda}) \mid \lambda=1, 2, \ldots, \ell\}$.
\STATE $I_1 \cap \cdots \cap I_\ell \gets (\text{primary decomposition of $I=\ag{F}$})$
\STATE $T \gets \emptyset$
\FOR{$\lambda=1$ \TO $\ell$}
\STATE $T_{F, \lambda}^* \gets \text{(the result of Algorithm~\ref{alg:5})}$
\STATE $T_{F, \lambda}  \gets \text{(the formal adjoint of $T_{F,\lambda}^*$)}$
\STATE $Z_\lambda \gets \sqrt{I_\lambda}$
\STATE $T \gets T \cup \{(T_{F, \lambda}, Z_\lambda)\}$
\ENDFOR
\RETURN $T$
\end{algorithmic}
\end{algorithm}

The algorithm above has been implemented on a computer algebra system Risa/Asir.
First, we give an example of a local residue mapping.

\begin{example}\label{ex:7.1}
For simplicity, we explain a case that a regular sequence gives a primary
ideal.  Let $f_1=(x^2-2)(x^4-4x^2-y^4-5y^3-9y^2-7y+2)$ and
$f_2=x^4-4x^2-y^3-3y^2-3y+3$.  Then the sequence $F=\{f_1,f_2\}$ is
regular and the corresponding ideal $I=\ag{F}$ is primary.  Noether
operators associated to $I$ have been given in Example~\ref{ex:3.6} and
annihilators of $\sigma_F = [\frac{1}{f_1f_2}]$ have been shown in
Example~\ref{ex:4.2}.  
For ease of implementation on a computer algebra system Risa/Asir, 
our program computes formal adjoints of operators in Section~\ref{sec:6}.
Using Algorithm~\ref{alg:4}, the operator $S$ on
$V_\C(\sqrt{J})$ can be calculated as
\begin{eqnarray*}
&&
S=-30 {\pd_x}^4 +150 x {\pd_x}^3-480{\pd_x}^2{\pd_y}^3-135(x^2+3){\pd_x}^2 
+ 720 x {\pd_x}{\pd_y}^3
\\
&&\qquad\qquad
+\frac{1575}{2} x {\pd_x}
-64 {\pd_y}^6-720 {\pd_y}^3-\frac{1575}{2}.
\end{eqnarray*}
The leading coefficient of $T$ can be given by $h(x) = -\frac{1}{184320}x$, thus
\begin{eqnarray*}
&&
T = 
\frac{1}{6144} x {\pd_x}^4-\frac{5}{3072} {\pd_x}^3+\frac{1}{384} x {\pd_x}^2 {\pd_y}^3+\frac{15}{4096} x {\pd_x}^2
\\&&\qquad\qquad
-\frac{1}{128} {\pd_x} {\pd_y}^3-\frac{35}{4096} {\pd_x} +\frac{1}{2880} x {\pd_y}^6+\frac{1}{256} x {\pd_y}^3+\frac{35}{8192} x.
\end{eqnarray*}

\medskip

Next, we show a log of our program.
The procedure \texttt{residuemap} is an implementation of Algorithm~\ref{alg:6}.
\begin{verbatim}
$ asir
[1825] load("oh_alc.rr");
[2452] V=[x,y];
[2453] F=[(x^2-2)*(x^4-4*x^2-y^4-5*y^3-9*y^2-7*y+2),
          x^4-4*x^2-y^3-3*y^2-3*y+3];
[2454] oh_alc.init(V,F);
[2455] oh_alc.residuemap();
[[1/6144*x*dx^4-5/3072*dx^3+(1/384*x*dy^3+15/4096*x)*dx^2
+(-1/128*dy^3-35/4096)*dx+1/2880*x*dy^6+1/256*x*dy^3
+35/8192*x,[x^2-2,y+1]]]
\end{verbatim}

\end{example}

\section{Local residues}

As explained in section~\ref{sec:3}, for any holomorphic function
$\varphi(x)$, the local residue at $\beta \in Z_\lambda$ can be evaluated as
\[
\res_\beta(\varphi \sigma_F dx) = (T_{F,\lambda}\bl \varphi)|_{x=\beta}.
\]
Accordingly, the local residue mapping can be
represented by using a set $\{(T_{F,\lambda},Z_\lambda) \mid
\lambda=1,2,\cdots,\ell\}$.

Given a holomorphic function $\varphi$, it is easy to apply $T_{F,\lambda}$ to the function.
The algorithm for evaluating local residues is as follows.
In the result, the variable $x=(x_1,\ldots,x_n)$ expresses a zero of each component $V_\C(\sqrt{I_\lambda})$.

\begin{algorithm}[H]
\caption{Local residue}
\label{alg:7}
\begin{algorithmic}
\REQUIRE $F=\{f_1, \ldots, f_n\}$: a regular sequence, $\varphi$: a holomorphic function
\ENSURE $\Phi=\{T_{F, \lambda}\varphi \mid \lambda=1, 2, \ldots, \ell\}$.
\STATE $\{(T_{F, \lambda}, \sqrt{I_\lambda}) \mid \lambda=1, 2, \ldots, \ell\} \gets \text{(the result of Algorithm~\ref{alg:6})}$.
\STATE $\Phi \gets \emptyset$
\FOR{$\lambda=1$ \TO $\ell$}
\STATE $\Phi \gets \Phi \cup \{(T_{F, \lambda}\bl \varphi, \sqrt{I_\lambda})\}$
\ENDFOR
\RETURN $\Phi$
\end{algorithmic}
\end{algorithm}

\section{Conclusion}

We have developed an algorithm for exactly evaluating Grothendieck local
residue via a representation of the local residue mapping in the general cases.  
Although our method is based on the theory of algebraic
local cohomology groups and $D$-modules, the algorithm can be realized
by using calculations on polynomial rings.  The resulting algorithms
have been implemented in a computer algebra system Risa/Asir.



\begin{thebibliography}{99}
\bibitem{E}
L. Ehrenpreis,
{\it Fourier Analysis in Several Complex Variables}, Wiley Interscience, 1970.
\bibitem{GTZ}
P. Gianni, B. Trager and G. Zacharias,
{Gr\"{o}bner} bases and primary decomposition of polynomial ideals,
Journal of Symbolic Computation {\bf 6} (1988), 149--167.
\bibitem{Ha}
R. Hartshorne,
{\it Residues and Duality},
Lecture Notes in Mathematics {\bf 20}, Springer, 1966.
\bibitem{Ho}
L. H\"{o}rmander,
{\it An Introduction to Complex Analysis in Several Variables},
the third revised edition,
North-Holland,
1990.
\bibitem{K1}
M. Kashiwara,
On the maximally overdetermined system of linear differential equations. I,
Publications of the Research Institute for Mathematical Sciences {\bf 10} (1975), 563--579.
\bibitem{K2}
M. Kashiwara,
On the holonomic systems of linear differential equations. II,
Inventiones Mathematicae {\bf 49} (1978), 121--135.
\bibitem{K3}
M. Kashiwara,
On holonomic systems of micro-differential equations. III --- systems with regular singularities ---,
Publications of the Research Institute for Mathematical Sciences {\bf 17} (1981), 813--979.
\bibitem{TN2}
Y. Nakamura and S. Tajima,
A method for computing holonomic systems associated with zero-dimensional algebraic local cohomology classes,
RIMS K\^{o}ky\^{u}roku {\bf 1199} (2001), 70--89.  (in Japanese)
\bibitem{N}
M. Noro,
{New algorithms for computing primary decomposition of polynomial ideals},
Mathematical Software --- ICMS 2010,
Lecture Notes in Computer Science {\bf 6327}, 233--244, Springer, 2010.
\bibitem{N2}
M. Noro et al., Risa/Asir a computer algebra system,  1994--2018.
http://www.math.kobe-u.ac.jp/Asir/
\bibitem{O}
T. Oaku,
{Algorithms for the $b$-functions, restrictions, and algebraic local cohomology groups of $D$-modules},
Advances in Applied Mathematics {\bf 19} (1997), 61--105.
\bibitem{OakuT}
T. Oaku and N. Takayama,
{Algorithms for $D$-modules --- restriction, tensor product, localization, and local cohomology groups},
Journal of Pure and Applied Algebra {\bf 156} (2001), 267--308.
\bibitem{OharaTajima}
K. Ohara and S. Tajima,
{An algorithm for computing Grothendieck local residues I --- shape base case ---}, 
submitted to Mathematics in Computer Science.
\bibitem{OX}
OpenXM committers, OpenXM, a project to integrate mathematical software systems. 1998--2018.
http://www.openxm.org
\bibitem{P}
F. Pham,
{\it Singularit\'{e}s des Syst\`{e}ms Diff\'{e}rentielles de Gauss-Manin},
Birkh\"{a}user, 1979.
\bibitem{Pa}
V. P. Palamodev,
{\it Linear Differential Operators with Constant Coefficients},
Springer, 1970.
\bibitem{T1}
S. Tajima,
{On Noether differential operators attached to a zero-dimensional primary ideal --- a shape basis case ---},
Finite or Infinite Dimensional Complex Analysis and Applications, 357--366, Kyushu Univ. Press, 2005.
\bibitem{T2}
S. Tajima,
{Noether differential operators and Grothendieck local residues},
RIMS K\^{o}ky\^{u}roku {\bf 1431} (2005), 123--136.  (in Japanese)
\bibitem{TON}
S. Tajima, T. Oaku, and Y. Nakamura, 
Multidimensional local residues and holonomic $D$-modules, 
RIMS K\^{o}ky\^{u}roku {\bf 1033} (1998), 59--70.  (in Japanese)
\bibitem{TN2}
S. Tajima and Y. Nakamura,
Computational aspects of Grothendieck local residues,
S\'{e}minaires et Congr\`{e}s {\bf 10} (2005), 287--305.
\end{thebibliography}
\end{document}